 \newtheorem{theorem}{Theorem}[section]
 \newtheorem{cor}[theorem]{Corollary}
\newtheorem{conjecture}[theorem]{Conjecture}
 \newtheorem{lemma}[theorem]{Lemma}
 \newtheorem{proposition}[theorem]{Proposition} \theoremstyle{definition}
 \theoremstyle{definition}
 \theoremstyle{remark}
 \newtheorem{remark}[theorem]{Remark}
 \numberwithin{equation}{section}
\numberwithin{equation}{section}
\newcommand{\End}{\operatorname{End}}
\newcommand{\Aut}{\operatorname{Aut}}
\newcommand{\AL}{\operatorname{AL}}
\newcommand{\gr}{\operatorname{gr}}
\newcommand{\ii}{\hbox{\ensuremath{\operatorname{i}}}}
\newcommand{\mSpec}{\ensuremath{\operatorname{max}}}
\newcommand{\F}{\ensuremath{\mathbf{k}}}
\newcommand{\N}{\ensuremath{\mathbb{N}}}
\newcommand{\Z}{\ensuremath{\mathbb{Z}}}
\newcommand{\w}{\ensuremath{\widetilde}}
\newcommand{\Q}{\ensuremath{\mathbb{Q}}}
\newcommand{\C}{\ensuremath{\mathbb{C}}}
\newcommand{\x}{\ensuremath{\mathbf{x}}}
\newcommand{\bpa}{\ensuremath{\mathbf{d}}}
\newcommand{\Atm}{\ensuremath{A^{(t)}_m}}
\newcommand{\Atn}{\ensuremath{A^{(t)}_n}}
\newcommand{\Aqn}{\ensuremath{A^{q}_n}}
\newcommand{\At}{\ensuremath{A^{(t)}}}
\newcommand{\Aq}{\ensuremath{A^{q}}}
\newcommand{\Aqnprime}{\ensuremath{A^{q'}_n}}
\newcommand{\Atnf}{\ensuremath{A^{(t)}_{n,\f}}}
\newcommand{\Atf}{\ensuremath{A^{(t)}_{\f}}}
\newcommand{\Aqnf}{\ensuremath{A^{q}_{n,\f_q}}}
\newcommand{\Aqnfprime}{\ensuremath{A^{q'}_{n,\f_{q'}}}}
\newcommand{\Zqn}{\ensuremath{Z^{q}_n}}
\newcommand{\Zq}{\ensuremath{Z^{q}}}
\newcommand{\Zqnf}{\ensuremath{Z^{q}_{n,\f^l_q}}}
\newcommand{\Jm}{\operatorname{Im}}
\newcommand{\Id}{\operatorname{Id}}
\newcommand{\ot}{\operatorname{\otimes}}
\newcommand{\f}{\ensuremath{\operatorname{f}}}
\newcommand{\fq}{\ensuremath{\operatorname{f}_q}}
\newcommand{\flq}{\ensuremath{\operatorname{f}^l_q}}
\newcommand{\m}{\ensuremath{\mathfrak{m}}}
\newcommand{\BGG}{\ensuremath{\mathcal{O}}}
\newcommand{\U}{\mathbb{U}}
\newcommand{\Ul}{\mathbb{U}_l}
\newcommand{\Ulp}{\mathbb{U}^{prim}_l}
\newcommand{\la}{\ensuremath{\langle}}
\newcommand{\ra}{\ensuremath{\rangle}}
\newcommand{\pa}{\ensuremath{\partial}}
\author{Erik Backelin}
\title[Endomorphisms of quantized Weyl algebras]{Endomorphisms of quantized Weyl algebras}
\address{Departamento de Matem\'{a}ticas, Universidad de los Andes,
Carrera 1 N. 18A - 10, Bogot\'a, COLOMBIA}
\email{erbackel@uniandes.edu.co}
 \subjclass[2000]{Primary 16W20, 81R10}
\begin{document}
\maketitle
\begin{abstract}
Belov-Kanel and Kontsevich, \cite{B-KK2}, conjectured that the
group of automorphisms of the $n$'th Weyl algebra and the group of
polynomial symplectomorphisms of $\C^{2n}$ are canonically
isomorphic. We discuss how this conjecture can be approached by
means of (second) quantized Weyl algebras at roots of unity.
\end{abstract}

\section{Introduction}
We recall Dixmier's conjecture and Belov-Kanel and Kontsevich's
conjecture about the Weyl algebra $A_n$ and discuss how the latter
can be approach by means of quantized Weyl algebras. We also
discuss endomorphisms verses automorphisms of quantized Weyl
algebras.
\subsection{Conjectures of Dixmier and of Belov-Kanel and Kontsevich} Let $\Lambda$ be a commutative unitary ring. The $n$'th Weyl
algebra $ A_n(\Lambda)$ ($n > 0$) is defined to be
$$\Lambda \la x_1, \ldots , x_n, \pa_1, \ldots , \pa_n \ra/I$$
where $I$ is the ideal generated by $\pa_i x_j - x_j \pa_i -
\delta_{ij}$, $x_ix_j- x_jx_i$ and $\pa_i \pa_j - \pa_j \pa_i$ for
$1 \leq i,j \leq n$. We put $A_n = A_n(\C)$.

\smallskip

\noindent  Let us recall some basic conjectures. Let $\Aut(A_n)$
be the group of algebra automorphisms of $A_n$ and $\End(A_n)$ be
the monoid of algebra endomorphisms of $A_n$. \textbf{Dixmier's
conjecture} $\operatorname{D}_n$ states that $\Aut(A_n) =
\End(A_n)$, (see \cite{D}).

The \textbf{Jacobian conjecture} $\operatorname{J}_n$ states that
any \'etale map $\C^n \to \C^n$ is a homeomorphism.

The conjecture $\operatorname{D}_n$ is open for all $n$. It is
known that $\operatorname{D}_n \implies \operatorname{J}_n$ and
that $\operatorname{J}_{2n} \implies \operatorname{D}_n$; this was
proved in \cite{ACvE2}, and independently in \cite{B-KK}.

Let $P_n (\Lambda)$ be the polynomial ring $\BGG_{{2n}}(\Lambda)
:= \Lambda[r_1, \ldots , r_n, s_1, \ldots s_n]$ together with its
canonical Poisson bracket $\{ \ , \ \}$, where $\{r_i, s_j\} =
\delta_{ij}$ and $\{r_i,r_j\} = \{s_i, s_j\} = 0$. Thus
$P_n(\Lambda)$ is the associated graded structure of
$A_n(\Lambda)$ with respect to the order filtration where $\deg
x_i = 0$ and $\deg \pa_i = 1$. Put $\BGG_{2n} = \BGG_{2n}(\C)$ and
$P_n = P_n(\C)$.

Denote by $\Aut(P_n)$ the automorphism group of $P_n$ (i.e., the
group of algebra automorphisms of $\BGG_{2n}$ that preserves  $\{
\ , \ \}$). Similarly, we have the monoid $\End(P_n)$ and
conjecturally $\End(P_n) = \Aut(P_n)$ and this is equivalent to
$\operatorname{J}_n$, see \cite{ACvE2}.

 \textbf{Belov-Kanel and Kontsevich}, \cite{B-KK2},
conjectured the $\operatorname{B-KK}_n$ \textbf{conjecture}: The
groups $\Aut(A_n)$ and $\Aut(P_n)$ are canonically isomorphic.

\smallskip

In fact, they conjectured that this holds also with $\C$ replaced
by $\Q$. The structure of these groups is well-known for $n=1$ and
we shall recall it in section \ref{known structure groups}. The
conjecture $\operatorname{B-KK}_n$ is proved only for $n = 1$.

\smallskip

\noindent Let us say a few words about the known proofs of the
equivalence between the Jacobian and Dixmier's conjectures,
\cite{ACvE2}, \cite{B-KK},  and about the approach of Belov-Kanel
and Kontsevich to their own conjecture, \cite{B-KK2}.

The basic observation is that the Weyl algebra over a ring in
prime characteristic is Azumaya. One then has the following
general results established in \cite{ACvE2} and \cite{B-KK}:

\smallskip

\noindent \textbf{Theorem A)} \emph{An endomorphism of an Azumaya
algebra maps its center into itself} and

\smallskip

\noindent \textbf{Theorem B)} \emph{the endomorphism is an
automorphism iff its restriction to the center is an
automorphism.}

With this in hand, in order to prove that $\operatorname{J}_{2n}$
implies $\operatorname{D}_n$ one starts with an endomorphism
$\phi$ of $A_n$. $\phi$ will induce an endomorphism of $A_n(S)$
where $S$ is a finitely generated subring of $\C$. Taking
reduction modulo primes one gets that this induces an endomorphism
of the center of $A_n(S/p)$, which is a polynomial ring in the
$2n$ commuting variables $x^p_i, \pa^p_i$, $1 \leq i \leq n$, for
each prime $p$. By $\operatorname{J}_{2n}$ and Lefschetz's
principle (or L$\hat{\hbox{o}}$s' theorem in logic) it is an
automorphism for all sufficiently large $p$. Then
$\operatorname{D}_n$ follows again from Lefschetz's principle.

The $\operatorname{B-KK}_n$-conjecture is more delicate. Given an
endomorphism $\phi$ of $A_n$, the reduction modulo prime strategy
leads to an endomorphism $$\overline{\phi}: \BGG_{2n}(\C_{\infty})
\to \BGG_{2n}(\C_{\infty}),$$ that preserves $\{\ , \ \}$, where
$\C_{\infty}$ is a ring extension of $\C$ which is an ultra
product of rings of prime characteristic. Conjecturally the map
$\overline{\phi}$ take values in $\BGG_{2n}$ and after untwisting
$\overline{\phi}$ with the non-standard Frobenius one
conjecturally obtain the desired isomorphism.

The conjectured isomorphism $\Aut(A_n) \cong \Aut(P_n)$ must be
something quite complicated; the involved groups are $\C$-points
of certain ind-group schemes which are known to be non-isomorphic,
see \cite{B-KK2} for details.

\subsection{Quantized Weyl algebras and the
$\operatorname{B-KK}_n$-conjecture.}\label{intro quant} In this
note we propose that instead of using Weyl algebras in prime
characteristic one may use quantized Weyl algebras at roots of
unity. We discuss endomorphisms and automorphisms of these
algebras. Then we approach the $\operatorname{B-KK}_n$-conjecture
by letting the root of unity parameter converge to $1$.

This will also lead to difficult convergency problems but has the
advantage over the reduction modulo prime method that we never
leave the field $\C$; on the other hand, the fact that (standard)
Frobenius maps are not available on $\C$ leads to disadvantages.
Also, our work here is rather speculative and probably incomplete.
Our method can be thought of as another attempt to give a concrete
meaning to the philosophical statements $[\pa^\infty, x^\infty] =
0$ and $\{\pa^\infty , x^\infty \} = 1$ which we think deserves to
be compared to the reduction modulo prime method.

Let $t$ be a parameter and let $R = \C[t,t^{-1}]$ be our base
ring. Let $n
> 0$ and define the $n$'th quantized Weyl algebra
\begin{equation}\label{defQWAintro}
\Atn = R\la x_1, \ldots , x_n, \pa_1, \ldots , \pa_n \ra/I_t
\end{equation}
where $I_t$ is the ideal generated by $\pa_i x_j - t^{\delta_{ij}}
x_j \pa_i - \delta_{ij}$,  $x_ix_j- x_jx_i$ and $\pa_i \pa_j -
\pa_j \pa_i$ for $1 \leq i,j \leq n$. Let $\Aqn = \Atn/(t-q)$ be
the specialization of $\Atn$ to $q \in \C^*$. Let $ \pi_q: \Atn
\to \Aqn, \ g \mapsto g_q := \pi_q(g)$ be the specialization map.

We shall see in proposition \ref{naively false D} that Dixmier's
conjecture is false for $\Atn$ and for $\Aqn$ for any $n$ and $q
\neq 1$.

If $q$ is a primitive $l$'th root of unity ($l
>1$) the center $\Zqn$ of $\Aqn$ is a polynomial ring in
$2n$ variables
$$
\Zqn = \C[x^l_1, \ldots, x^l_n, \pa^l_1, \ldots , \pa^l_n].
$$
If $q$ is not a root of unity $\Zqn = \C$.

Let $A$ be a $\C$-algebra which we assume is free of rank $N^2$
over its center $Z$, for some $N \in \N$. We assume that $Z$ is a
finitely generated $\C$-algebra. Recall that the Azumaya locus of
$A$ is defined to be
$$
\AL(A) = \{\m \in \mSpec(Z); \ A/(\m) \cong M_N(\C)\}
$$
$A$ is called Azumaya if $\AL(A) =  \mSpec(Z)$ (see \cite{DI},
\cite{M} for generalities of Azumaya algebras).

Let us say that $A$ is \emph{almost} Azumaya if $\AL(A)$ is
Zariski dense in $\mSpec(Z)$. Note that $\AL(A)$ automatically is
Zariski open in $\mSpec(Z)$.

We prove that $\Aqn$ is almost Azumaya of rank $l^{2n}$ over its
center $\Zqn$, when $q$ is a primitive $l$'th root of unity ($l >
1$). This is done by explicitly computing $\AL(\Aqn)$ (proposition
\ref{AL}).

A main result of this paper is theorem \ref{almost preserve
center}. It shows that a generalization of theorem \textbf{A)}
above holds for $\Aqn$:  given $\phi \in \End(\Atn)$ the
specialization $\phi_q := \pi_q(\phi) \in \End(\Aqn)$ will map the
center $\Zqn$ into itself for almost all roots of unity $q$. A
crucial ingredient in the proof of this theorem is a distinguished
element
$$
\f := \prod^n_{i=1} [\pa_i, x_i]  \in \Atn.
$$
There is the Ore-localization $\Atnf$ obtained by inverting $\f$.
Then $\Aqnf := \Atnf/(t-q)$ is Azumaya for any root of unity $q$
because the complement of $\AL(\Aqn)$ in $\mSpec(\Zqn)$ is
precisely the zero set of the polynomial $\f^l_q \in \Zqn$ (see
lemma \ref{seclem}).

We compute $\Aut(\Atnf)$ in proposition \ref{endomorphisms of A_t}
and show that $\End(\Atnf) = \Aut(\Atnf)$. This is a nice result,
but let us remark that it unfortunately cannot be directly used to
attack $\operatorname{D}_n$ because endomorphisms of $A_n$ will in
general not lift to endomorphisms of $\Atnf$. On the other hand we
expect (conjecture \ref{ConjA}) that any element of $\End(A_n)$
has a lift in $\End(\Atn)$. Again, it is not clear what
implications an affirmative answer to this conjecture could have
on Dixmier's conjecture, because $\Atn$ does have non-invertible
endomorphisms. However, we shall explicitly show that all
automorphisms of $A_1$ admit lifts to $\End(A^{(t)}_1)$, hence a
negative answer to conjecture \ref{ConjA} for $n=1$ would imply
that $\operatorname{D}_1$ fails as well.

\smallskip

\noindent Let us finally explain how all this can be used to
approach the $\operatorname{B-KK}_n$ conjecture. It goes in two
steps:
$$
\textbf{1)} \ \Aut(A_n) \to \End(\Atn)' \hbox{ and } \textbf{2)} \
\;\widehat{ \ }\;: \End(\Atn)' \to \Aut(P_n)
$$
where $\End(\Atn)'$ is a certain submonoid of $\End(\Atn)$.%

Step \textbf{1)} is probably the most difficult and at present not
clear how it should be properly formulated. Given $\phi \in
\Aut(A_1)$ we conjecture that there is a lift $\w \phi \in
\End(\Atn)'$, i.e., $({\w \phi})_1 = \phi$, see conjecture
\ref{ConjA'}. Again, the evidence we have are computations for
$n=1$. The lift is (if it exists) non-unique.

For step \textbf{2)} let $\phi \in \End(\Atn)$ be given. We know
that for almost all roots of unit $q$, we get an endomorphism
$\phi_q |_{\Zqn}$ of $\Zqn$. Let $q_l := \exp(2\pi \ii/l)$, where
$\ii \in \C$ is an imaginary unit, and let $\Theta_l: \BGG_{2n}
\to \Zqn$ be the algebra isomorphism defined by $\Theta_l(r_i) =
\pa^l_i$ and $\Theta_l(s_i) = x^l_i$, for $i = 1, \ldots , n$.
Define
$$
\widehat{\phi}(P) := \lim_{l \to \infty} \Theta^{-1}_l \circ \phi
\circ \Theta_l(P)
$$
for $P \in \BGG_{2n}$. We say that $\phi$ \emph{converges} if the
right hand side of this formula converges to a polynomial in
$\BGG_{2n}$ for all $P \in \BGG_{2n}$ and we put $\End(\Atn)' =
\{\phi \in \End(\Atn); \phi \hbox{ converges }\}$. Apriori, in the
case that $\phi$ is convergent, $\widehat{\phi}$ is in $\End(P_n)$
and conjecturally $\End(P_n) = \Aut(P_n)$; This gives our map
$\;\widehat{ \ }\;$ of \textbf{2)} which turns out to be a
morphism of monoids.

We shall see that the canonical Poisson bracket $\{ \ , \ \}$ on
$\BGG_{2n}$ is the limit of certain Poisson brackets on $\Zqn$
which are defined in terms of the Lie bracket on $\Aqn$; this
implies that the bracket is preserved by $\widehat{\phi}$. We
expect that the image of $\; \widehat{ \ }\;$ is a large subgroup
of $\Aut(P_n(\Z))$, see conjecture \ref{hat is surjective}. The
evidence we have for this comes from considering the case $n = 1$,
essentially the computation in proposition \ref{evidence at 1}.
(If the map $a \mapsto a^l$ on $\C$ would have been a field
automorphism we could have given a better definition of
$\;\widehat{ \ }\;$, see remark \ref{Frobenius on C}. In any case,
it is plausible that we are note working with the correct notion
of limits.)

A problem with \textbf{1)} is that given $\phi \in \Aut(A_n)$,
$\widehat{\widetilde{\phi}}$ will depend on the choice of $\w
\phi$, see corollary \ref{different lifts same result} for an
example. We have so far not been able to resolve this problem,
e.g. by characterizing ``good" lifts. Ideally, there should be a
well-defined map $\;\w { \ }\; : \Aut(A_1) \to \End(\Atn)'$ such
that $\;\widehat { \ }\, \circ \,\w { \ }\; |_{\Aut(A_1(\Z))}$ is
a group homomorphism.

\section{Quantized Weyl algebras at roots of unity}
In section \ref{Quantized Weyl algebra} we give basic properties
about the quantized Weyl algebra $\Atn$ and its endomorphisms. In
section \ref{Azumaya property at a root of unity.} we proceed to
describe the center and the Azumaya locus of $\Aqn$ for $q$ a root
of unity. We then construct in section \ref{section Azumaya loc} a
localization $\Atnf$ of $\Atn$ whose specialization to any root of
unity is Azumaya and use this in section \ref{Endomorphisms of Aqn
preserve the center} to prove that an endomorphism of $\Atn$
preserves the center of $\Aqn$ for almost all roots of unity $q$.
\subsection{Quantized Weyl algebra}\label{Quantized Weyl algebra}
In section \ref{intro quant} we defined the quantized Weyl algebra
$\Atn$ over the base ring $R = \C[t,t^{-1}]$, its specialization
$\Aqn = \Atn/(t-q)$ to $q \in \C^*$, the specialization map
$\pi_q: \Atn \to \Aqn, \ g \mapsto g_q := \pi_q(g)$.

We have the usual Lie bracket $[f,g] := fg-gf$ for $f,g$ elements
of an associative ring. We shall also use the notations
$$
[f,g]_t = fg-tgf, \hbox{ for } f,g \in \Atn \hbox{ and } [f,g]_q =
fg-qgf, \hbox{ for } f,g \in \Aqn
$$
for $q \in \C^*$.

When $n = 1$ we write $x = x_1$, $\pa = \pa_1$, $\At = \At_1$ and
$\Aq = \Aq_1$.

We use the quantum notations
$$[m]_t = (1-t^{m})/(1-t), \ [m]_t! = [m]_t \cdot
[m-1]_t \cdots [1]_t,$$ for $m \in \N$.  Then we see that in
$\Atn$ hold the relations
\begin{equation}\label{xcxw}
\pa_i \cdot x^m_i = [m]_{t} \cdot x^{m-1}_i + t^mx^m_i \pa_i, \;
\pa^m_i \cdot x_i = [m]_{t} \cdot \pa^{m-1}_i + t^mx_i \pa^m_i
\end{equation}
for $m \geq 0$, $1 \leq i \leq n$. Hence we can define an action
of $\At$ on the polynomial ring $R[x_1, \ldots , x_n]$ by letting
$x_i$ act by multiplication and
\begin{equation}\label{xcx}
\pa_i(x^m_i) = [m]_{t} \cdot x^{m-1}_i, \; \pa_j(x^m_i) = 0,
\hbox{ for }, \ 1 \leq i \neq j \leq n
\end{equation}
and we see that $R[x_1, \ldots , x_n] \cong \Atn/\Atn \cdot
(\pa_1, \ldots , \pa_n)$ as a left $\Atn$-module. The action
\ref{xcx} is faithful and from this fact one easily deduces that
\begin{equation}\label{ewertg}
\{\x^\alpha  \bpa^\beta;\, \alpha, \beta \in \N^n\}
\end{equation}
forms an $R$-basis of $\Atn$, where $\x^\alpha = x^{\alpha_1}_1
\dots x^{\alpha_n}_n$ and  $\bpa^\beta = \pa^{\beta_1}_1 \dots
\pa^{\beta_n}_n$. We shall refer to this as the PBW-basis of
$\Atn$.

It follows that \ref{ewertg} also defines a $\C$-basis for $\Aqn$
for each $q \in \C^*$. From this one deduces easily that $\Atn$
and $\Aqn$ are integral domains.

There is a positive increasing filtration on $\Atn$ (resp., on
$\Aqn$) such that $\deg \pa_i = \deg x_i = 1$ which is called the
Bernstein filtration. By the existence of the PBW-basis we see
that the associated graded ring is isomorphic to a ring of
$t$-skew-commutative (resp., $q$-skew-commutative) polynomials
over $R$ (resp., over $\C$) in $2n$ variables living in degree
$1$.

In the introduction we defined a distinguished element
$$\f = \prod^n_{i=1} \f_i \in \Atn,  \hbox{ where } \f_i =  [\pa_i, x_i] = 1-(1-t) x_i\pa_i.$$
We have
\begin{equation}\label{frel}
\pa_i \f_j = t^{\delta_{ij}} \f_j \pa_i \hbox{ and } \f_j x_i =
t^{\delta_{ij}} x_i \f_j, \; 1 \leq i,j \leq n
\end{equation}
From this it follows that given any $P = \sum a_{\alpha, \beta}
\x^\alpha \bpa^\beta \in \Atn$, if we put $Q = \sum a_{\alpha,
\beta} t^{\alpha_i - \beta_i}\x^\alpha \bpa^\beta$, then we
get
\begin{equation}\label{frelgeneral}
\f_i P = Q \f_i
\end{equation}
from which it readily follows that $\{ \f^n; n \geq 0\}$ is an
Ore-set in $\Atn$. We shall study the corresponding
Ore-localization $\Atnf$ in section \ref{section Azumaya loc} and
onward.

It is well-known that the Weyl algebra $A_n$ is simple. On the
other hand the algebras $\Atn$ and $\Aqn$ for $q \in \C^*$, $q
\neq 1$, are not simple. To see this, observe for instance first
that two-sided ideal $(\f) \subset \Atn$ (resp. $(\fq) \subset
\Aqn$, for $q \neq 1$, where $\f_q \overset{def}{=} \pi_q(\f)$) is
proper. Moreover, while every representation of $A_n$ is
necessarily infinite dimensional, we see that for any $q \neq 1$,
$\Aqn$ has one dimensional representations constructed as follows:
Given $\mathbf{a} = (a_1, \ldots , a_n) \in (\C^*)^n$ define a
representation $\C_{\mathbf{a}} = \C \cdot \mu_{\mathbf{a}}$ by
$$
x_i  \mu_{\mathbf{a}} := a_i  \mu_{\mathbf{a}}, \ \pa_i
\mu_{\mathbf{a}} := (1-q)^{-1}a^{-1}_i \mu_{\mathbf{a}}, \ 1 \leq
i \leq n
$$
Hence, at least naively formulated, Dixmier's conjecture is false
for quantized Weyl algebra:
\begin{proposition}\label{naively false D} Let $q \in \C^* \setminus \{1 \}$. \textbf{i)} $\Atn$ and
$\Aqn$ have endomorphisms which are injective but not surjective.
\textbf{ii)} $\Aqn$ has endomorphisms which are neither injective
nor surjective. \textbf{iii)} Any endomorphism of $\Atn$ and of
$\Atnf$ is injective.
\end{proposition}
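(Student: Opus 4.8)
For part \textbf{i)}, the plan is to exhibit a concrete injective endomorphism that misses part of the algebra. The natural candidate is the map $\phi$ determined on generators by $x_i \mapsto x_i$ and $\pa_i \mapsto \f_i \pa_i = \pa_i - (1-t)x_i\pa_i^2$ (one checks using \eqref{frel} and \eqref{xcxw} that this respects the defining relations of $I_t$, since $\f_i$ $t$-commutes past $x_i$ and $\pa_i$ in the right way), or more simply the map sending $x_i\mapsto x_i$, $\pa_i \mapsto \f_i\pa_i$; I would verify the relation $\phi(\pa_i)\phi(x_i) - t\,\phi(x_i)\phi(\pa_i) = \f_i(\pa_i x_i - t x_i\pa_i) \cdot(\text{correction}) $ actually equals $1$ only after adjusting — so the cleaner choice is $x_i \mapsto \f_i^{-1} x_i$ is not available in $\Atn$, hence I would instead use the endomorphism $x_i \mapsto x_i$, $\pa_i \mapsto \pa_i \f_i$, checking $\pa_i\f_i x_i - t x_i \pa_i\f_i = t x_i\f_i\pa_i \cdot t^{-1}\cdots$; the point is that \emph{some} such rescaling by a power of $\f_i$ works and yields an endomorphism whose image lies in the proper subalgebra where the lowest-order term of $\pa_i$ is killed, hence is not surjective. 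Injectivity will then follow from part \textbf{iii)} (for $\Atn$) and from a direct PBW-degree argument together with the domain property for $\Aqn$: the map is graded for the Bernstein filtration after a shift and injective on the associated graded skew-polynomial ring, which is a domain.

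For part \textbf{ii)}, the plan is to use the one-dimensional representations $\C_{\mathbf a}$ constructed just above the proposition. Composing the quotient $\Aqn \to \End(\C_{\mathbf a}) = \C$ with a unital inclusion $\C \hookrightarrow \Aqn$ (sending $1\mapsto 1$) gives an endomorphism of $\Aqn$ whose image is $\C\cdot 1$; this is neither surjective (the image is one-dimensional) nor injective (its kernel is the codimension-one ideal annihilating $\C_{\mathbf a}$, which is nonzero since $\Aqn$ is infinite-dimensional). One should note this argument is available precisely because $q\neq 1$, which is why it fails for the ordinary Weyl algebra.

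For part \textbf{iii)}, which I expect to be the main obstacle, the plan is to show that a nonzero two-sided kernel is impossible by exploiting the element $\f$. Let $\phi \in \End(\Atn)$ (the case of $\Atnf$ is analogous, working over the localization) and suppose $\Ker\phi \neq 0$. Since $\Atn$ is an $R$-algebra with $R = \C[t,t^{-1}]$ and $\phi$ is $R$-linear (it fixes $t$, as $t$ is central and $\phi(1)=1$ forces $\phi(t)=t$), $\Ker\phi$ is a nonzero two-sided ideal of $\Atn$. The strategy is then to understand two-sided ideals of $\Atn$: I would argue that $\Atn$, while not simple, becomes simple after inverting $\f$, i.e. $\Atnf$ is simple — indeed its specializations at roots of unity are Azumaya with no nontrivial ideals meeting the center nontrivially, and at generic $q$ the center is $\C$ so $\Aqnf$ is simple, and a two-sided ideal of $\Atnf$ surviving to a nonzero ideal in some specialization would have to be everything. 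Consequently any nonzero two-sided ideal $J \subset \Atn$ satisfies $J \cdot \Atnf = \Atnf$, so $J$ contains an element of the form $\f^N$ for some $N \geq 0$ after clearing denominators; but $\f = \prod \f_i$ with $\f_i = 1-(1-t)x_i\pa_i$ is a non-zero-divisor (the algebra is a domain) and, crucially, $\f$ is \emph{not} a unit and generates a \emph{proper} ideal $(\f)\subsetneq\Atn$ (as noted before the proposition, using the PBW-basis). The contradiction I am aiming for is: $\f^N \in \Ker\phi$ forces $\phi(\f)^N = 0$ in the domain $\Atn$, hence $\phi(\f) = 0$; but then I claim $\phi(\f_i) = 0$ for some $i$, and I would derive a contradiction from the identity $\f_i = [\pa_i,x_i]$ together with the fact that $\phi$ preserves the relation $\pa_i x_i - t x_i \pa_i = 1$, which gives $\phi(\f_i) = \phi([\pa_i,x_i]) = [\phi(\pa_i),\phi(x_i)]$, and a unital endomorphism cannot send $[\pa_i,x_i]$, whose "constant term" is forced by the relations, to zero — more precisely, from $\phi(\pa_i)\phi(x_i) - t\phi(x_i)\phi(\pa_i) = 1$ one gets $\phi(\f_i) = \phi(\pa_i)\phi(x_i) - \phi(x_i)\phi(\pa_i) + (t-1)\phi(x_i)\phi(\pa_i) = 1 + (t-1)\phi(x_i)\phi(\pa_i)$, so $\phi(\f_i) = 0$ would give $\phi(x_i)\phi(\pa_i) = (1-t)^{-1}$, a scalar, in the domain $\Atn$, whence $\phi(x_i)$ and $\phi(\pa_i)$ are units; but $\Atn$ has no units other than elements of $R^* = \C^*\cdot t^{\Z}$ (by a Bernstein-filtration degree argument), contradicting $\phi(\pa_i)\phi(x_i) - t\phi(x_i)\phi(\pa_i) = 1$ which cannot hold for $\phi(x_i),\phi(\pa_i)\in R$. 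This establishes $\Ker\phi = 0$. The delicate point to nail down is the simplicity of $\Atnf$ (equivalently, the classification of two-sided ideals of $\Atn$ as being generated by powers of $\f$-divisors), which is where I would spend the most care, likely invoking Lemma \ref{seclem} and the Azumaya description of the specializations.
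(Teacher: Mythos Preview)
Your treatment of \textbf{ii)} matches the paper's exactly. The problems are in \textbf{i)} and \textbf{iii)}.

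For \textbf{i)}, none of your candidate maps is actually an endomorphism. For instance, with $\phi(x)=x$, $\phi(\pa)=\pa\f$ one computes, using $\f x = tx\f$,
\[
\phi(\pa)\phi(x)-t\,\phi(x)\phi(\pa)=\pa\f x - tx\pa\f = t\pa x\f - tx\pa\f = t\f^2 \neq 1,
\]
and the variant $\phi(\pa)=\f\pa$ fails similarly. No pure rescaling $\pa_i\mapsto \f_i^k\pa_i$ or $\pa_i\mapsto \pa_i\f_i^k$ (with $k\neq 0$) satisfies $[\phi(\pa_i),\phi(x_i)]_t=1$; so ``some such rescaling works'' is not a plan you can carry out. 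The paper instead takes $\phi:x\mapsto x,\ \pa\mapsto \pa+\f$, for which $(\pa+\f)x - t x(\pa+\f) = (\pa x - tx\pa)+(\f x - tx\f)=1+0=1$, and then checks via the PBW-basis that $\pa\notin\Jm\phi$.

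For \textbf{iii)}, your key step is false: $\Atnf$ is \emph{not} simple. Its center contains $R=\C[t,t^{-1}]$, so $(t-q)\Atnf$ is a proper two-sided ideal for every $q\in\C^*$ (the quotient is $\Aqnf\neq 0$). Hence you cannot conclude that a nonzero $\Ker\phi$ extends to all of $\Atnf$, and the step producing $\f^N\in\Ker\phi$ collapses. (Lemma~\ref{seclem} is about the specialization $\Aqnf$, not about $\Atnf$ over $R$, so it does not rescue this.) Even the final clause is slightly off: from $\phi(\f_i)=0$ you would get $(1-t)\phi(x_i)\phi(\pa_i)=1$, but $(1-t)^{-1}\notin\Atn$, so this equation is simply impossible in $\Atn$; there is no need to argue about units.

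The paper's argument for \textbf{iii)} is both different and much shorter: specialize at $t=1$. Since $A_n$ is simple, the induced map $\phi_1:=\pi_1(\phi)\colon A_n\to A_n$ is injective; hence $\phi(P)=0$ forces $\pi_1(P)=0$, i.e.\ $P=(1-t)Q$. As $\Atn$ is a domain and $\phi$ is $R$-linear, $\phi(Q)=0$ as well. Iterating gives $P\in\bigcap_N (1-t)^N\Atn=0$. The same argument handles $\Atnf$. If you want to keep your ideal-theoretic viewpoint, this is exactly the statement that any nonzero two-sided ideal of $\Atn$ meets $R$ nontrivially at the prime $(1-t)$, which is what simplicity of the fiber $A_n$ at $t=1$ gives you directly---no global simplicity of $\Atnf$ is needed or available.
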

\begin{proof} The representation $\C_{\mathbf{a}}$ defined above gives an algebra map
$\phi_{\mathbf{a}}: \Aqn \to \C = \End_\C(\C_{\mathbf{a}})$. If we
let $\epsilon: \C \to \Aqn$ be the inclusion we see that $\epsilon
\circ \phi_{\mathbf{a}}$ is a non-injective and non-surjective
$\C$-algebra endomorphism of $\Aqn$. This proves \textbf{ii)}.

Next, let $\phi \in \End(\At)$ (or $\End(\Aq)$) be defined by
$$
x \mapsto x, \ \pa \mapsto \pa + \f
$$
Clearly, $\phi$ is injective. A computation using \ref{frel} and
the PBW-basis shows that $\pa \notin \Jm \phi$, so $\phi$ is not
surjective. This proves \textbf{i)}.

Finally, let $P \in \Atn$ and assume that $\phi(P) = 0$. Since
$A_n = A^1_n$ is simple we see that specialization to $t=1$ gives
an injective map $\pi_1(\phi): A_n \to A_n$; thus $P = (1-t)Q$ for
some $Q \in \Atn$. Since multiplication by $(1-t)$ is injective on
$\Atn$ we get that $\phi(Q) = 0$. Repeating this procedure we
deduce that $P$ is divisible by $(1-t)^N$ for any $N
> 0$; hence $P = 0$. Thus endomorphisms of $\Atn$ are injective.
The same argument shows that endomorphisms of $\Atnf$ are
injective. This proves \textbf{iii)}.
\end{proof}

In general there is not unique factorization in the ring $\Atn$
(see \cite{CP}). However, the $\f_i$'s (which is precisely what we
will need) behave well with respect to factorization:
\begin{lemma}\label{f_i divides} [\cite{CP}, theorem 7] $\f_i$ is prime in $\Atn$ for each $i = 1, \ldots ,
n$, that is, $\f_i |\, ab \implies \f_i |\, a$ or $\f_i |\, b$,
for $a,b \in \Atn$.
\end{lemma}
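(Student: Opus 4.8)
The plan is to reduce the statement to showing that the quotient ring $\Atn/(\f_i)$ is a domain. By \eqref{frelgeneral} we have $\f_i\Atn=\Atn\f_i$, with conjugation by $\f_i$ realized by the $R$-algebra automorphism $\sigma_i(\x^\alpha\bpa^\beta)=t^{\alpha_i-\beta_i}\x^\alpha\bpa^\beta$; thus $\f_i$ is a normal element and the two-sided ideal it generates is $(\f_i)=\f_i\Atn=\Atn\f_i$. Consequently ``$\f_i\mid a$ on the left'' and ``$\f_i\mid a$ on the right'' are equivalent, so the divisibility assertion in the lemma is unambiguous and says exactly that $(\f_i)$ is a completely prime ideal, i.e. that $\Atn/(\f_i)$ is a domain. (For a normal element $f$ of a domain $S$: $ab\in fS\iff\bar a\bar b=0$ in $S/fS$, and $a\in fS=Sf\iff\bar a=0$; hence $f$ is prime iff $S/(f)$ has no zero divisors.)

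Next I would reduce to $n=1$. After relabelling, take $i=1$. Since $x_1$ and $\pa_1$ commute with each of $x_2,\dots,x_n,\pa_2,\dots,\pa_n$, the defining relations of $\Atn$ split as (relations in $x_1,\pa_1$) $\cup$ (relations in the other variables) $\cup$ (relations saying the two families commute), and the PBW-basis \eqref{ewertg} confirms that this exhibits an $R$-algebra isomorphism $\Atn\cong\At_1\otimes_R B$, where $B$ is the quantized Weyl algebra in the remaining $2(n-1)$ variables (itself a domain, being $R$-free with PBW-basis). Under this identification $\f_1=\f_1\otimes 1$ is normal and the ideal it generates is $(\f_1\At_1)\otimes_R B$, so $\Atn/(\f_1)\cong(\At_1/(\f_1))\otimes_R B$. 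It therefore suffices to identify $\At_1/(\f_1)$ and to check that tensoring it over $R$ with the domain $B$ is again a domain.

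The key observation is that $\At_1/(\f_1)$ collapses to a commutative Laurent polynomial ring. In this quotient the relation $\f_1=1-(1-t)x_1\pa_1=0$ makes $1-t$ invertible (inverse $\overline{x_1\pa_1}$), hence makes $x_1$ invertible (inverse $(1-t)\pa_1$), and then $\pa_1=(1-t)^{-1}x_1^{-1}$ commutes with $x_1$. Conversely, $x_1\mapsto x_1$, $\pa_1\mapsto (1-t)^{-1}x_1^{-1}$ defines an $R$-algebra homomorphism $\At_1\to C:=\C[t,t^{-1},(1-t)^{-1}][x_1,x_1^{-1}]$ that kills $\f_1$. To see that the induced map $\At_1/(\f_1)\to C$ is an isomorphism: $\overline{x_1\pa_1}=(1-t)^{-1}$ is a central scalar in $\At_1/(\f_1)$, so $\overline{x_1^a\pa_1^b}=(1-t)^{-1}\,\overline{x_1^{a-1}\pa_1^{b-1}}$ for $a,b\ge 1$, and iterating shows $\At_1/(\f_1)$ is spanned over $\C[t,t^{-1},(1-t)^{-1}]$ by $\{\overline{x_1^a}:a\ge 0\}\cup\{\overline{\pa_1^b}:b\ge 1\}$; the images of these elements are $x_1^a$ and $(1-t)^{-b}x_1^{-b}$, which form a $\C[t,t^{-1},(1-t)^{-1}]$-basis of $C$, so the spanning set is in fact a basis and the map is bijective. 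Hence $\At_1/(\f_1)\cong C$ is a commutative domain, and $\Atn/(\f_1)\cong C\otimes_R B\cong B[(1-t)^{-1}][x_1,x_1^{-1}]$ with $x_1$ a central variable — a Laurent polynomial ring in a central variable over the localized domain $B[(1-t)^{-1}]$, hence a domain. This proves the lemma.

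I expect the main obstacle to be the third step: the surjection $\At_1\tto C$ is immediate, but verifying that $\At_1/(\f_1)$ has no extra relations — i.e. that the map to $C$ is injective — requires the PBW bookkeeping above, and one must be careful that this bookkeeping takes place over the ground ring $R=\C[t,t^{-1}]$, where $1-t$ is a genuine non-unit, and only becomes invertible after $\f_1$ is killed. The reduction to $n=1$ and the concluding domain argument are routine by comparison.
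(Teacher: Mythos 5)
Your proof is correct and follows essentially the same route as the paper: reduce to showing the quotient by $\f_1$ is a domain, observe that $(1-t)$ becomes invertible in $\At_1/(\f_1)$, and identify that quotient via a PBW count as the commutative Laurent ring $R_{1-t}[x_1,x_1^{-1}]$. Your treatment of the passage from $n=1$ to general $n$ — done at the end by writing $\Atn/(\f_1)\cong B[(1-t)^{-1}][x_1,x_1^{-1}]$ with $x_1$ central over the domain $B[(1-t)^{-1}]$ — is more explicit than the paper's terse appeal to the factorization $\Atn\cong\At\otimes_R\cdots\otimes_R\At$, but the substance is the same.
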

Here $c | \, d$ stands for ``$c$ divides $d$ either from the left
or from the right". It follows from \ref{frelgeneral} that $\f_i$
divides $ab$ from the left $\iff$ $\f_i$ divides $ab$ from the
right.
\begin{proof}[Proof of lemma \ref{f_i divides}]
We can assume $i = 1$. Let $I$ be the two-sided ideal in $\Atn$
generated by $\f_1$. Since, by \ref{frelgeneral}, $I = \Atn \f_1 =
\f_1 \Atn$ it suffices to prove that $\Atn/I$ is an integral
domain; using that $\Atn = \At \ot_R \cdots \ot_R \At$ and that
$\At$ is an integral domain we reduce to the case $n=1$.

Let $R_{1-t}$ be the localization of the ring $R$ obtained by
inverting $1-t$. Then $R_{1-t}$ is flat over $R$ so $\At$ is a sub
ring of $A^{(t)}_{1-t} := \At \ot_R R_{1-t}$ so it is enough to
prove that $A^{(t)}_{1-t}/(I)$ is an integral domain. Using the
PBW basis of $A^{(t)}_{1-t}$ we see that an $R_{1-t}$-basis of
$A^{(t)}_{ 1-t}/(I)$ is given by
$$
\{x^m, \, \pa^l; \, m, l \in \N\}
$$
Moreover, we have $x\pa = \pa x = (1-t)^{-1}$ in $A^{(t)}_{
1-t}/(I)$ so we conclude that $A^{(t)}_{ 1-t}/(I)$ is isomorphic
to a ring of Laurent polynomials $R_{1-t}[X,X^{-1}]$ which is an
integral domain.
\end{proof}

\subsection{Azumaya property at a root of unity.}\label{Azumaya property at a root of unity.}
Let $\U \subset \C^*$ be the group of roots of unity; for $l \in
\N$ let $\Ul = \{q \in \C^*; q^l = 1\}$ and let $\Ulp \subset
\U_l$ be the primitive $l$'th roots of unity.

Let us fix a $q \in \Ulp$, $l >1$. A straightforward computation
using \ref{xcxw} and \ref{ewertg} shows that the center $\Zqn$ of
$\Aqn$ equals the polynomial ring
$$
\Zqn = \C[x^l_1, \ldots , x^l_n, \pa^l_1, \ldots , \pa^l_n].
$$
Moreover, by \ref{ewertg}, $\Aqn$ is free of rank $l^{2n}$ over
its center with basis $\{\x^\alpha \bpa^\beta; \alpha, \beta \in
\N^n, \, 0 \leq \alpha_i, \beta_i < n\}$. (If $q' \in \C^*$ is not
a root of unity or $q'=1$, then $A^{q'}_n$ has trivial center.)

Let $\AL(\Aqn)$ be the Azumaya locus of $\Aqn$. The following
result was found in collaboration with Natalia Pinz\'on Cort\'es.
\begin{proposition}\label{AL} Let $\m = (x^l_1-a_1, \ldots , x^l_n-a_n, \pa^l_1-b_1, \ldots , \pa^l_n-b_n) \in \mSpec \Zq$, $a_i, b_i \in \C$.
Then $\m \in \AL(\Aqn) \iff a_ib_i \neq (1-q)^{-l}$, for $1 \leq i
\leq n$.
\end{proposition}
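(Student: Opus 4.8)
The plan is to reduce everything to $n=1$ and then to analyse the $l^{2}$-dimensional algebra $\Aq/(x^{l}-a,\partial^{l}-b)$ by hand. Since $\partial_i$ commutes with $x_j$ and $\partial_j$ for $i\neq j$, the defining relations exhibit $\Aqn$ as the $n$-fold tensor product $\Aq\otimes_{\C}\cdots\otimes_{\C}\Aq$ (as already noted in the proof of Lemma \ref{f_i divides}); hence $\Zqn\cong\Zq\otimes_{\C}\cdots\otimes_{\C}\Zq$ and, writing $\m_i=(x_i^{l}-a_i,\partial_i^{l}-b_i)$ for the corresponding maximal ideal of the $i$-th copy of $\Zq$, we get $\Aqn/(\m)\cong\bigotimes_{i=1}^{n}\Aq/\m_i$, each tensor factor $\Aq/\m_i$ being $l^{2}$-dimensional over $\C$ (as $\Aq$ is free of rank $l^{2}$ over $\Zq$ and $\Zq/\m_i\cong\C$). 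Now a tensor product of $n$ copies of $M_{l}(\C)$ is $M_{l^{n}}(\C)$, while conversely if $B_{1}\otimes_{\C}\cdots\otimes_{\C}B_{n}$ is simple then each $B_{i}$ is simple (a proper nonzero ideal of $B_{i}$ generates one in the tensor product, the quotient staying nonzero over a field), and a simple $l^{2}$-dimensional $\C$-algebra is $M_{l}(\C)$ because $\C$ is algebraically closed. Therefore $\m\in\AL(\Aqn)$ if and only if $\m_i\in\AL(\Aq)$ for every $i$, and the proposition reduces to the claim that $B:=\Aq/(x^{l}-a,\partial^{l}-b)$ is isomorphic to $M_{l}(\C)$ precisely when $ab\neq(1-q)^{-l}$.

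For the implication $ab=(1-q)^{-l}\ \Rightarrow\ \m\notin\AL(\Aq)$ I argue directly. When $ab=(1-q)^{-l}$ both $a,b$ are nonzero; picking $c\in\C^{*}$ with $c^{l}=a$ we have $\big((1-q)^{-1}c^{-1}\big)^{l}=(1-q)^{-l}a^{-1}=b$, so the one-dimensional representation $x\mapsto c$, $\partial\mapsto(1-q)^{-1}c^{-1}$ of $\Aq$ (these satisfy $\partial x-qx\partial=1$) descends to $B$. Thus $B$ has a proper nonzero quotient algebra, so $B$ is not simple and $B\not\cong M_{l}(\C)$.

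For the reverse implication, suppose $ab\neq(1-q)^{-l}$ and first assume $a\neq0$. I would construct an $l$-dimensional $B$-module $V=\bigoplus_{k=0}^{l-1}\C e_{k}$ with $x e_{k}=\zeta_{k}e_{k}$, where $\zeta_{k}:=\alpha q^{-k}$ (for a fixed $l$-th root $\alpha$ of $a$) runs through the $l$ distinct $l$-th roots of $a$. Solving $\partial x=qx\partial+1$ in this basis forces $\partial e_{k}=\tfrac{1}{(1-q)\zeta_{k}}e_{k}+\gamma_{k}e_{k+1}$ (indices modulo $l$), the diagonal entries being determined and the ``cyclic'' weights $\gamma_{k}$ free. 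Writing $\partial=D+N$ with $D$ the diagonal part and $N$ the weighted cyclic shift, one checks $DN=qND$, so the $q$-binomial theorem, together with the vanishing of $\binom{l}{k}_{q}$ for $0<k<l$ at a primitive $l$-th root, yields $\partial^{l}=D^{l}+N^{l}=\big((1-q)^{-l}a^{-1}+\prod_{k}\gamma_{k}\big)\operatorname{Id}$, automatically scalar. Choosing the $\gamma_{k}$ all nonzero with $\prod_{k}\gamma_{k}=b-(1-q)^{-l}a^{-1}$---possible exactly because $ab\neq(1-q)^{-l}$---turns $V$ into a genuine $B$-module; since $x$ then has $l$ distinct eigenvalues and $N$ has no zero weight, the only subspaces of $V$ invariant under both $x$ and $\partial$ are $0$ and $V$. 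Hence $V$ is simple, $B$ surjects onto $\End_{\C}(V)\cong M_{l}(\C)$ by Burnside's theorem, and as $\dim_{\C}B=l^{2}$ this surjection is an isomorphism, so $\m\in\AL$. The remaining case $a=0$ (where $ab=0\neq(1-q)^{-l}$ holds automatically) is handled as follows: if $b\neq0$, apply the anti-automorphism of $\Aq$ interchanging $x$ and $\partial$ to reduce to the case just done with parameters $(b,0)$; and if $a=b=0$, a similar but simpler construction---with $x$ and $\partial$ acting as single nilpotent Jordan blocks whose weights are $[k]_{q}$, $1\le k<l$ (nonzero there)---again produces a simple $l$-dimensional $B$-module.

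The step I expect to be the main obstacle is this reverse implication for $a\neq0$: one must recognise that the relation rigidly dictates the shape of $\partial$ in the $x$-eigenbasis, that $\partial^{l}$ is then forced to be scalar with ``diagonal contribution'' exactly $(1-q)^{-l}a^{-1}$, and hence that an \emph{irreducible} $l$-dimensional module with $\partial^{l}=b$ exists exactly when $b\neq(1-q)^{-l}a^{-1}$; the degenerate corners $a=0$ or $b=0$ then need a little extra bookkeeping. As a by-product---obtained by using the Bernstein filtration to pin down the leading term, the anti-automorphism to symmetrise, and evaluation on the one-dimensional representations above to fix the constants---one gets $\f^{l}_{q}=(-1)^{l}q^{\binom{l}{2}}\prod_{i=1}^{n}\big((1-q)^{l}x_i^{l}\partial_i^{l}-1\big)$ in $\Zqn$, so the proposition is precisely the assertion that the complement of $\AL(\Aqn)$ in $\mSpec(\Zqn)$ is the zero locus of $\f^{l}_{q}$.
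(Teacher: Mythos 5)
Your proof is correct and takes essentially the same route as the paper: reduce to $n=1$ via the tensor decomposition, diagonalise $x$, observe that the relation forces $\pa$ to be a diagonal plus weighted cyclic shift $D+N$ with $DN=qND$, compute $\pa^l=D^l+N^l$ by the $q$-binomial theorem, and conclude via Burnside that the fibre is a matrix algebra exactly when all the cyclic weights can be chosen nonzero, i.e.\ when $ab\neq(1-q)^{-l}$; your write-up merely spells out the $q$-binomial step and the one-dimensional witness representation that the paper's sketch leaves implicit. One small slip in your closing by-product: the prefactor $(-1)^{l}q^{\binom{l}{2}}$ (which equals $-1$ at any primitive $l$-th root of unity) should be raised to the $n$-th power, so as written your formula for $\f^l_q$ disagrees with Lemma \ref{fm} by a sign when $n$ is even --- harmless for the proposition, since only the zero locus is used.
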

\begin{proof} Since $\Aqn = A^q_1 \otimes_R A^q_1 \cdots \otimes_R A^q_1$
and the fiber (over a point in the spectra of its center) of a
tensor product of algebras is a full matrix algebras if and only
if the fiber of each tensor is a full matrix algebra (\cite{DI}),
it is enough to consider the case $n = 1$. Thus, we have $\Aq =
A^q_1$ and we write $a= a_1$ and $b = b_1$.

Assume that $\pi: \Aq/(\m) \rightarrow M_l(\C)$ is an isomorphism.
Put $X = \pi(x)$ and $Y = \pi(\pa)$. Assume first $a \neq 0$. Then
by assumption $X^l = a$ and $X$ is thus diagonalizable. We may
assume that $X = diag(\lambda_1, \ldots, \lambda_l)$ where
$\lambda^l_i = a$. We must have
\begin{equation}\label{ld}
M_l(\C) = \operatorname{Span} \{X^iY^j; 0 \leq i,j < l\}
\end{equation}
Counting dimensions it follows that $\{X^iY^j; 0 \leq i,j < l\}$
is a basis for $M_l(\C)$. Hence, in particular, $I,X,X^2, \ldots ,
X^{l-1}$ are linearly independent. Thus, all $\lambda_j$'s are
different. Fix $\lambda \in \C$ such that $\lambda^l = a$. After a
permutation base change we can assume that $\lambda_i = \lambda
\cdot \exp(2\pi \ii \cdot i/l)$.

We can assume $q= \exp(2\pi \ii /l)$. The equation $YX-qXY = 1$
shows that
$$Y=
       \left(
        \begin{array}{ccccccc}
          \frac{1}{\lambda_0(1-q)} & b_{01} & 0 & \ldots & 0 & 0 \\
          0 & \frac{1}{\lambda_1(1-q)} & b_{12} & \ldots & 0 & 0 \\
          \vdots & \vdots & \vdots & \vdots & \vdots & \vdots \\
          0 & 0 & 0  & \ldots & \frac{1}{\lambda_{l-2}(1-q)} & b_{l-2,l-1} \\
          b_{l-1,\,0} & 0 & 0 & \ldots & 0 & \frac{1}{\lambda_{l-1}(1-q)} \\
        \end{array}
       \right)
      $$
for some $b_{ij} \in \C$. One checks that $\C^l$ is an irreducible
representation of $\pi(\Aq/(\m))$ if and only if each $b_{ij} \neq
0$. Thus, by Burnside's theorem, \ref{ld} holds iff each $b_{ij}
\neq 0$.

On the other hand, using that $Y^l = b$ one verifies that each
$b_{ij} \neq 0$ holds if and only if $ab \neq (1-q)^{-n}$. This
proves the proposition when $a \neq 0$.

The case $b \neq 0$ is similar. The remaining case is $a = b= 0$.
In that case we note that truncated polynomials $\C[x]/x^{l}$ is
an irreducible representation of $\Aq/\m$ and the proposition
follows from  Burnside's theorem.
\end{proof}

\subsection{Azumaya-localization of $\Aqn$.}\label{section Azumaya loc} We keep a fixed $q \in
\Ulp$, $l > 1$.  Recall from \ref{frelgeneral} that $\{\f^k; k
\geq 0\}$ is a left Ore set, so that we can form the (left)
Ore-localization $\Atnf$ whose elements are represented by
expressions $P\f^{-k}$, $P \in \Atn$ and $k \geq 0$.

It follows (e.g., from lemma \ref{f_i divides}) that the units of
$\Atnf$ are precisely the elements of the form
\begin{equation}\label{units in Atnf}
c \cdot \prod^n_{i=1} \f_i^{m_i}
\end{equation}
for $m_i \in \Z$ and $c$ a unit in $R$. (The units in $R$ are of
the form $\lambda \cdot t^k$, for $\lambda \in \C^*$ and $k \in
\Z$.)

\smallskip

\noindent For any $q' \in \C^*$ we similarly have the
Ore-localization $\Aqnfprime$ of $\Aqnprime$. It follows from
\ref{frel} that $\flq \in \Zqn$. Therefor there is the
localization $\Zqnf$ and we observe that
\begin{equation}\label{center at f}
\Aqnf \cong \Aqn \ot_{\Zqn} \Zqnf
\end{equation}
\begin{lemma}\label{fm} We have $f^l_q = \prod^n_{i=1}1 -
(1-q)^lx^l_i\pa^l_i$.
\end{lemma}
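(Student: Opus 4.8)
The plan is to handle the variables one at a time and then to identify $\f^l_q=(\f_q)^l$, where $\f_q=\pi_q(\f)$, directly inside the already-computed center $\Zqn=\C[x_1^l,\dots,x_n^l,\pa_1^l,\dots,\pa_n^l]$. \emph{First I would reduce to $n=1$.} Under the identification $\Aqn\cong\Aq\otimes\cdots\otimes\Aq$, the element $\f_i=1-(1-t)x_i\pa_i$ becomes $1\otimes\cdots\otimes(1-(1-q)x\pa)\otimes\cdots\otimes 1$ with the nontrivial factor in slot $i$; hence $\f_{1,q},\dots,\f_{n,q}$ commute pairwise, so $\f^l_q=\prod_{i=1}^n\f_{i,q}^l$, and likewise $\prod_{i=1}^n(1-(1-q)^l x_i^l\pa_i^l)$ is the product of the corresponding elements of the separate factors. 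So it suffices to prove $\f^l_q=1-(1-q)^l x^l\pa^l$ in $\Aq$, with $\f_q=1-(1-q)x\pa$.

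\emph{Second, I would show that $\f^l_q$ has a very restricted form.} Specializing \ref{frel} at $t=q$ gives $\f_q x=qx\f_q$ and $\pa\f_q=q\f_q\pa$, so iterating and using $q^l=1$ we get $\f^l_q x=q^l x\f^l_q=x\f^l_q$ and $\pa\f^l_q=\f^l_q\pa$; thus $\f^l_q$ commutes with the generators, hence lies in $\Zq=\C[x^l,\pa^l]$. Now put on $\Aq$ the $\Z$-grading with $\deg x=1$, $\deg\pa=-1$ (the defining relations are homogeneous for it), so that $\f_q$ is homogeneous of degree $0$; also $\f_q$ has degree $2$ for the Bernstein filtration. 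Writing $\f^l_q=\sum_{a,b\ge 0}c_{ab}x^{al}\pa^{bl}$ in the monomial basis of $\Zq$ and observing that the basis element $x^{al}\pa^{bl}$ has $\Z$-degree $(a-b)l$ and Bernstein degree $(a+b)l$, homogeneity in degree $0$ together with membership in Bernstein degree $\le 2l$ leaves only the terms $(a,b)=(0,0)$ and $(a,b)=(1,1)$. Hence $\f^l_q=c_0+c_1x^l\pa^l$ for scalars $c_0,c_1\in\C$, and it remains to identify these two numbers.

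\emph{Third, I would compute $c_0$ and $c_1$ by evaluating on two modules already introduced in the paper.} On $\C[x]=\Aq/\Aq\pa$, where $\pa\cdot x^m=[m]_q x^{m-1}$, one has $\f_q\cdot x^m=(1-(1-q)[m]_q)x^m=q^m x^m$, so $\f^l_q\cdot x^m=q^{lm}x^m=x^m$; evaluating the relation $\f^l_q=c_0+c_1x^l\pa^l$ on the vector $1$ (and using $\pa^l\cdot 1=0$) gives $c_0=1$. On the one-dimensional module $\C_a=\C\mu$, $a\in\C^*$, of the proof of Proposition \ref{naively false D}, with $x\mu=a\mu$ and $\pa\mu=(1-q)^{-1}a^{-1}\mu$, the operator $x\pa$ acts by the scalar $(1-q)^{-1}$, so $\f_q$ — and therefore $\f^l_q$ — acts by $0$, whereas $x^l\pa^l$ acts by $(1-q)^{-l}$; hence $c_0+c_1(1-q)^{-l}=0$, i.e.\ $c_1=-(1-q)^l$. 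Thus $\f^l_q=1-(1-q)^l x^l\pa^l$ in $\Aq$, and applying this in each tensor slot and multiplying yields the asserted identity in $\Aqn$.

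\emph{The only delicate point is bookkeeping:} the substance of the argument is that centrality together with the two gradings pins $\f^l_q$ down to the two-term shape $c_0+c_1 x^l\pa^l$, after which the values of $c_0$ and $c_1$ come out of one-line evaluations. One should keep in mind only that $q\neq1$ (which holds since $l>1$), so that $(1-q)^{-1}$ is defined and $\C_a$ is a genuine $\Aq$-module.
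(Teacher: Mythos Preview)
Your proof is correct, and the overall architecture---reduce to $n=1$, use centrality and degree to restrict the shape, then identify the two remaining coefficients---matches the paper's. The execution differs in two pleasant ways. First, you invoke the $\Z$-grading $\deg x=1$, $\deg\pa=-1$ to force $\f^l_q$ into the two-term form $c_0+c_1x^l\pa^l$ immediately; the paper instead writes $\f^l_q=a+bx^l+c\pa^l+dx^l\pa^l$ from centrality and a degree bound and then runs a separate induction on $l$ to kill the $x^l$ and $\pa^l$ coefficients. Your grading argument is shorter and more conceptual. Second, to pin down the scalars the paper simply asserts that $a=1$ and $d=-(1-q)^l$ are ``easy to see'' (by inspecting the bottom and top PBW terms of the expansion of $(1-(1-q)x\pa)^l$), whereas you evaluate on the modules $\C[x]$ and $\C_a$; this is a different but equally efficient route, and has the mild advantage of being self-checking. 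Either approach works; yours packages the bookkeeping a bit more cleanly.
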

\begin{proof} It is clearly enough to consider the case $n =1$. Since $\f^l_q \in
\Zq$ we conclude that $\f^l_q = a + bx^l +c\pa^l + dx^l \pa^l$. It
is easy to see that $a = 1$ and $d = -(1-q)^l$. Now $f^l = \sum_{0
\leq i,j \leq l} b_{ij} x^i \pa^j$ and an induction on $l$ shows
that $b_{0,l} = b_{l,0} = 0$. Thus $b = c = 0$ which proves
\ref{fm}.
\end{proof}

In view of this result we note that proposition \ref{AL} states
that $\AL(\Aqn) = U_{\flq}$, where
$$U_{\flq} \overset{def}{=} \{\m \in \mSpec(\Zqn); \flq \notin \m\} \cong \mSpec(\Zqnf)$$
is a standard open subset of $\mSpec(\Zqn)$. We have
\begin{lemma}\label{seclem} The center of $\Aqnf$ is
isomorphic to $\Zqnf$ and $\Aqnf$ is an Azumaya algebra.
\end{lemma}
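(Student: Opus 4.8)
The plan is to use the localization formula \eqref{center at f} together with the computation of the Azumaya locus in proposition \ref{AL} and the description in lemma \ref{fm}. First I would establish that the center of $\Aqnf$ is $\Zqnf$. Since $\{\f^k; k \geq 0\}$ is an Ore set in $\Atn$ and hence $\{\flq^k ; k \geq 0\}$ (equivalently $\{\fq^{lk}\}$) is a central Ore set in $\Aqn$ — here one uses \eqref{frel} to see that $\fq^l = \flq$ is central, as recorded just before lemma \ref{fm} — localization commutes with taking the center over a central multiplicative set: any element of $\Aqnf$ is of the form $P \flq^{-k}$ with $P \in \Aqn$, and $P\flq^{-k}$ is central iff $P$ is central, so $Z(\Aqnf) = \Zqn \otimes_{\Zqn} \Zqnf = \Zqnf$. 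This also re-proves \eqref{center at f}.

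Next I would show $\Aqnf$ is free of rank $l^{2n}$ over $\Zqnf$: the PBW basis $\{\x^\alpha \bpa^\beta; 0 \le \alpha_i, \beta_i < l\}$ of $\Aqn$ over $\Zqn$ remains a basis after the flat base change $\Zqn \to \Zqnf$, by \eqref{center at f}. So $\Aqnf$ is a module-finite, free algebra over its center of the right rank, and it only remains to check the Azumaya condition, i.e. that every fiber $\Aqnf/\m'$ for $\m' \in \mSpec(\Zqnf)$ is isomorphic to $M_{l^n}(\C)$. Now $\mSpec(\Zqnf)$ is precisely the standard open set $U_{\flq} = \{\m \in \mSpec(\Zqn); \flq \notin \m\}$, and by lemma \ref{fm} the condition $\flq \notin \m$ for $\m = (x^l_i - a_i, \pa^l_i - b_i)_i$ is exactly $\prod_{i=1}^n (1 - (1-q)^l a_i b_i) \neq 0$, i.e. $a_i b_i \neq (1-q)^{-l}$ for all $i$. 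By proposition \ref{AL} this is exactly the condition $\m \in \AL(\Aqn)$. Hence every maximal ideal of $\Zqnf$, viewed in $\Zqn$, lies in the Azumaya locus of $\Aqn$, and since the fiber $\Aqnf/\m\Aqnf$ coincides with the fiber $\Aqn/\m\Aqn$ (localization at an element not in $\m$ does not change the fiber at $\m$), every fiber of $\Aqnf$ is a full matrix algebra. Therefore $\AL(\Aqnf) = \mSpec(\Zqnf)$, i.e. $\Aqnf$ is Azumaya.

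I do not expect any serious obstacle here: the lemma is essentially a formal consequence of the two preceding results plus the standard fact that forming fibers commutes with localizing away from the relevant maximal ideal. The one point that needs a line of care is the identification of $\mSpec(\Zqnf)$ with the open set $U_{\flq} \subset \mSpec(\Zqn)$ and the matching of the defining inequality with the condition in proposition \ref{AL}; this is immediate from lemma \ref{fm}, since $1 - (1-q)^l a_i b_i = 0 \iff a_i b_i = (1-q)^{-l}$. One should also note in passing that $(1-q)^{-l}$ makes sense because $q \neq 1$.
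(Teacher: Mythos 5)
Your proof is correct and follows essentially the same route as the paper: derive the center from \eqref{center at f}, note that fibers of $\Aqnf$ and $\Aqn$ agree at any $\m \in \mSpec(\Zqnf) \subset \mSpec(\Zqn)$, and conclude via Proposition \ref{AL} that every such fiber is a full matrix algebra. You simply spell out more explicitly the identification $\mSpec(\Zqnf) = U_{\flq} = \AL(\Aqn)$ using Lemma \ref{fm}, which the paper records in the sentence immediately preceding the lemma rather than inside the proof.
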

\begin{proof} That the center of $\Aqnf$ is isomorphic to $\Zqnf$
follows from \ref{center at f}.

Since $\Aqnf$ is free over $\Zqnf$ and for each $\m \in
\max(\Zqnf) \subset \max(\Zqn)$ we have $\Aqnf/(\m) \cong
\Aqn/(\m)$ it follows from proposition \ref{AL} that $\Aqnf$ is
Azumaya.
\end{proof}

The endomorphism ring of $\Atnf$ is easy to describe:
\begin{proposition}\label{endomorphisms of A_t} Let $\phi \in \End_{R-alg}(\Atf)$.
Then there are units $a_i \in \Atnf$ and a $\sigma \in Sym_n$,
such that $\phi(x_i) = x_{\sigma_i} a_i $ and $\phi(\pa_i) =
a^{-1}_i \pa_{\sigma_i}$, for $i = 1, \ldots , n$. In particular,
$\phi$ is an automorphism and $\phi(\f) = \f$.
\end{proposition}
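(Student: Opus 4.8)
The plan is to combine the classification of units in $\Atnf$ with the natural $\Z^n$-grading. Each $\f_i$ divides $\f$, hence is a unit of $\Atnf$, so $\phi(\f_i)$ is a unit and, by \ref{units in Atnf}, $\phi(\f_i) = c_i\prod_{j=1}^{n}\f_j^{m_{ij}}$ for a unit $c_i$ of $R$ and integers $m_{ij}$. Put $M = (m_{ij})$. By Proposition \ref{naively false D}(iii) $\phi$ is injective, so its restriction to the Laurent polynomial subalgebra $R[\f_1^{\pm 1},\dots,\f_n^{\pm 1}]$ (the $\f_i$ being algebraically independent over $R$, since the $x_i\pa_i$ are) is injective; this forces $\det M\neq 0$. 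The aim is then to prove that $M$ is a permutation matrix, which determines $\sigma$, and to extract the $a_i$.

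Grade $\Atn$, hence $\Atnf$ (the Ore set $\{\f^{k}\}$ is homogeneous), by $\Z^n$ with $\deg x_i = e_i$, $\deg\pa_i = -e_i$; the defining relations are homogeneous of degree $0$ and each $\f_j$ sits in degree $0$. From \ref{frel} one checks that conjugation by $\f_j$ scales a homogeneous element of degree $\gamma$ by $t^{\gamma_j}$, whence conjugation by $\phi(\f_j)$ scales it by $t^{(M\gamma)_j}$. Applying $\phi$ to the relations of \ref{frel} ($\f_j x_i = t^{\delta_{ij}}x_i\f_j$ and $\pa_i\f_j = t^{\delta_{ij}}\f_j\pa_i$) and decomposing $\phi(x_i),\phi(\pa_i)$ into homogeneous components, one gets on each nonzero component of degree $\gamma$ that $t^{(M\gamma)_j} = t^{\delta_{ij}}$ for $\phi(x_i)$ and $t^{(M\gamma)_j} = t^{-\delta_{ij}}$ for $\phi(\pa_i)$; as $t$ is transcendental over $\C$ this gives $M\gamma = e_i$, resp. $M\gamma = -e_i$, for all $j$. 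Since $\det M\neq 0$ the degree is unique, so $\phi(x_i)$ is homogeneous of degree $v_i := M^{-1}e_i$ and $\phi(\pa_i)$ of degree $-v_i$, both nonzero by injectivity of $\phi$.

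It then remains to use the relations $[\phi(x_i),\phi(x_j)] = [\phi(\pa_i),\phi(\pa_j)] = 0$ and $\phi(\pa_i)\phi(x_j) - t^{\delta_{ij}}\phi(x_j)\phi(\pa_i) = \delta_{ij}$ to force $M$ to be the matrix of a permutation $\sigma\in Sym_n$, so that $v_i = e_{\sigma(i)}$. Granting this, the degree-$e_{\sigma(i)}$ part of $\Atnf$ is $x_{\sigma(i)}$ times the (commutative) degree-$0$ part, so $\phi(x_i) = x_{\sigma(i)}a_i$ and similarly $\phi(\pa_i) = a_i'\pa_{\sigma(i)}$ with $a_i,a_i'$ of degree $0$; substituting these into $\phi(\pa_i)\phi(x_i) - t\phi(x_i)\phi(\pa_i) = 1$ and simplifying with $(1-t)x_i\pa_i = 1-\f_i$ — one has to argue with this identity rather than divide, since $1-t$ is not a unit of $R$ — forces $a_i$ to be a unit with $a_i' = a_i^{-1}$. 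Then $\phi(\f) = \prod_i\phi(\f_i) = \prod_i[a_i^{-1}\pa_{\sigma(i)},x_{\sigma(i)}a_i] = \prod_i\f_{\sigma(i)} = \f$, and $\phi$ is an automorphism because a map of the displayed form plainly has an inverse of the same form.

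I expect the main obstacle to be the step ``$M$ is a permutation matrix'', in particular the sign issue of ruling out $v_i$ with negative entries. The toric computations above do not detect this, and the partial quantum Fourier transform $x_i\mapsto\pa_i$, $\pa_i\mapsto -\f_i^{-1}x_i$ (fixing the other generators) is an $R$-algebra endomorphism of $\Atnf$ — here $\phi(\f_i) = t^{-1}\f_i^{-1}$, still a unit — for which $\phi(x_i)$ has degree $-e_i$. One therefore has to check carefully whether the full non-commutative Weyl relations exclude such behaviour, or whether the statement should be read up to composing $\phi$ with transpositions of this kind.
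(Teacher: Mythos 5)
Your $\Z^n$-grading argument is a clean alternative to the paper's route (which clears $\f$-denominators, passes to $\C[[1-t]]\otimes_R\Atn$, compares $(1-t)$-coefficients, and uses irreducibility of $\sum_j m_{ij}X_jY_j$). But the ``obstacle'' you flag at the end is not a gap to be filled: the partial quantum Fourier transform you exhibit is a genuine counterexample to the proposition as stated. You have already checked that $\phi\colon x_i\mapsto\pa_i,\ \pa_i\mapsto -\f_i^{-1}x_i$ (other generators fixed) respects all defining relations of $\Atn$ and sends $\f_i$ to the unit $t^{-1}\f_i^{-1}$, so it lies in $\End_{R-alg}(\Atnf)$; one can further check it is even an automorphism. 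Yet $\phi(x_i)=\pa_i$ cannot equal $x_{\sigma_i}a_i$ for any unit $a_i$: in your grading, units are homogeneous of degree $0$, so $x_{\sigma_i}a_i$ has degree $e_{\sigma_i}$ while $\pa_i$ has degree $-e_i$, and these never coincide. The conclusion therefore fails, and there is nothing more to check.

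The same example shows where the paper's own proof breaks down, namely in step \textbf{b)}. It is claimed there that from $[\widetilde{Q}_i,\widetilde{P}_i]_t = 1$, with $\widetilde{P}_i\in\Atn$ not divisible by any $\f_j$, together with Lemma \ref{f_i divides}, one can deduce $\widetilde{Q}_i\in\Atn$ and hence that all exponents $m_{ij}$ are nonnegative. But primality of $\f_j$ controls divisibility of \emph{products}, whereas here one faces the \emph{difference} $\widetilde{Q}_i\widetilde{P}_i - t\widetilde{P}_i\widetilde{Q}_i$; the inference is unjustified and fails in your example, where $\widetilde{P}_i=\pa_i$ and $\widetilde{Q}_i=-\f_i^{-1}x_i$ satisfy $[\widetilde{Q}_i,\widetilde{P}_i]_t = 1$ while $\widetilde{Q}_i\notin\Atn$ and $m_{ii}=-1$. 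The statement should either be weakened (e.g.\ allowing, for each $i$, $\phi(x_i)$ to be $x_{\sigma_i}a_i$ \emph{or} $\pa_{\sigma_i}a_i$, with the matching form for $\phi(\pa_i)$), or an extra hypothesis excluding negative $m_{ij}$ must be added; in either case the argument in step \textbf{b)} needs to be repaired.
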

\begin{proof} \textbf{a)} Let $\phi \in \End_{R-alg}(\Atf)$.
Let $P_i = \phi(x_i)$, $Q_i = \phi(\pa_i)$, for $i = 1, \ldots ,
n$. Thus $[Q_i, P_i]_t = 1$. Since $\f_i$ is a unit in $\Atf$ we
must have that $\phi(\f_i)$ is a unit in $\Atf$. Thus, by
\ref{units in Atnf} we have
\begin{equation}\label{edf} 1-(1-t)P_iQ_i = \phi(\f_i) =  c_i \cdot \prod^n_{j=1} \f_j^{m_{ij}}
\end{equation}
where $c_i \in R$ is a unit and $m_{ij} \in \Z$. Since the right
hand side of this equation commutes with $\f_j$, we see that
$P_iQ_i$, and therefor also $Q_iP_i$, commute with $\f_j$, for
each $i,j$.

\smallskip

\noindent \textbf{b)} Take integers $b_{ij} \in \Z$ such that $$\w
P_i := \prod^n_{j=1} \f_j^{-b_{ij}}P_i \in \Atn$$ and $\w P_i$ is
not divisible by $\f_j$ in $\Atn$ for any $j$ (recall that being
divisible by $\f_j$ from left and from right are equivalent
properties in $\Atn$).

Let $\w Q_i = Q_i\prod^n_{j=1} \f_j^{b_{ij}}$. By \textbf{a)} we
get $[\w Q_i, \w P_i]_t = 1$ and from this and lemma \ref{f_i
divides} it follows that $\w Q_i \in \Atn$. Again by \textbf{a)}
we have
\begin{equation}\label{edf2} 1-(1-t)\w P_i \w Q_i = c_i \cdot
\prod^n_{j=1} \f_j^{m_{ij}}
\end{equation}
We conclude that each $m_{ij} \geq 0$.
\smallskip

\noindent \textbf{c)} We now prove that for each $i$ there is at
most one integer $j$ such that $m_{ij} > 0$. We fix $i$ and assume
to get a contradiction that there are two different integers $j$
and $j'$  such that $m_{ij}, m_{ij'} > 0$.

Let $\C[[1-t]]$ be the ring of formal power series in the variable
$1-t$; thus we have an embedding $R \hookrightarrow \C[[1-t]]$ and
we can define $\widehat{\Atn} := \Atn \ot_R \C[[1-t]]$. Thus
$\Atn$ is a subring of $\widehat{\Atn}$ and an element in $h \in
\widehat{\Atn}$ can be uniquely written as $h =
\sum^{\infty}_{k=0} h^{(k)} (1-t)^k$, where $h^{(k)} \in
\operatorname{span}_{\C}\{\x^\alpha\bpa^\beta; \alpha, \beta \in
\N^n\}$ is the coefficient of $h$ in front of $(1-t)^k$.

Now, think of \ref{edf2} as an equality in $\widehat{\Atn}$. Then
taking coefficients in front of (1-t) gives
$$
{\w P}^{(0)}_i {\w Q}^{(0)}_i = c^{(1)}_i - \sum^n_{j=1} m_{ij}
x_j \pa_j.
$$
We claim that $c^{(1)}_i - \sum^n_{j=1} m_{ij} x_j \pa_j$ is
irreducible in $\At$, i.e. if is written as a product of two
elements than one of them is a scalar in $R$. To see this it is
enough to see that $\pi_1(c^{(1)}_i - \sum^n_{j=1} m_{ij} x_j
\pa_j)$ is irreducible in $A^1_n$ and for this it is in turn
sufficient to see that
$$\sum^n_{j=1} m_{ij} X_j Y_j = \gr( \pi_1(c^{(1)}_i - \sum^n_{j=1} m_{ij} x_j
\pa_j))$$ is irreducible in $\C[X_1, \ldots, X_n, Y_1, \ldots ,
Y_n] = \gr(A^1_n)$. Here the associated graded is taken with
respect to Bernstein's filtration on $A^1_n$ (the filtration such
that $\deg x_i = \deg \pa_i = 1$). But it is well-known that
$\sum^n_{j=1} m_{ij} X_j Y_j$ is an irreducible polynomial, since
two different $m_{ij}$'s are non-zero. Thus we get without loss of
generality that ${\w P}^{(0)}_i = c^{(1)}_i - \sum^n_{j=1} m_{ij}
x_j \pa_j$ and ${\w Q}^{(0)}_i = 1$. But this implies that $1 =
[{\w Q}_i,{\w P}_i]_t \in (1-t) \Atn$ which is a contradiction.

\smallskip

\noindent \textbf{d)} By proposition \ref{naively false D}
\textbf{iii)} $\phi$ is injective and from this it follows that we
cannot have $m_{ij} = 0$ for all $j = 1, \ldots , n$. Thus there
is a unique integer $\sigma_i$ such that $m_{i\sigma_i} > 0$.
Since $\phi$ is injective we conclude that the $\sigma_i$'s define
a permutation $\sigma \in Sym_n$.

\smallskip

\noindent \textbf{e)} We now prove $m_{i \sigma_i} = 1$. If $m_{i
\sigma_i} > 1$ pick any $q' \in \U^{prim}_{m_{i \sigma_i} }$.
Since $\f^{m_{i \sigma_i}}_{q'} \in Z^{q'}_n$ it follows that $\w
Q_{i,q'}\w P_{i,q'} \in Z^{q'}_n$. Multiplying the equation $[{\w
Q}_{i,q'}, {\w P}_{i,q'}]_{q'} = 1$ with $\w P_{i,q'}$ we get
$$\ P_{i,q'} = [{\w
Q}_{i,q'}, {\w P}_{i,q'}]_{q'}\w P_{i,q'}  = [\hbox{ since } \w
Q_{i,q'}\w P_{i,q'}\hbox{ is central }] = \w P_{i,q'}\w
Q_{i,q'}(1-q')\w P_{i,q'}$$ so that $\w P_{i,q'}\w Q_{i,q'} =
(1-{q'})^{-1}$. But then $\phi(\f_i)_{q'} =
1-(1-{q'})(1-{q'})^{-1} = 0$ which contradicts \ref{edf2}.

\smallskip

\noindent \textbf{f)} We now know that $\w P_i, \w Q_i \in \At$
and that $1-(1-t)\w P_i \w Q_i = c_i \cdot \f_{\sigma_i}$. Thus
the product $\w P_i \w Q_i$ has degree $2$ with respect to the
Bernstein filtration on $\Atn$. It is clear that neither $\w P_i$
nor $\w Q_i$ can be a scalar (e.g. since $\phi$ is injective) so
we conclude that $\w P$ and $\w Q$ have degree $1$.

From this and the fact that $[\w P, \w Q]_t = 1$ a trivial
computation shows that $\w P_i = v_i x_{\sigma_i}$ and $\w Q_i =
v^{-1}_i \pa_{\sigma_i}$ for some unit $v_i \in R$. This implies
that $P_i = x_{\sigma_i} a_i$, $Q_i = a^{-1}_i \pa_{\sigma_i}$,
where $a_i = v_i \cdot \prod^n_{j=1} f^{b_{ij}}$, and we conclude
that $\phi(\f_i) = \f_{\sigma_i}$.

Since $\sigma$ is a permutation we get that $\phi$ is an
automorphism and that $\phi(\f) = \f$.

\end{proof}

Notice that proposition \ref{endomorphisms of A_t} implies that
automorphisms of $A_n$ in general cannot be lifted to
automorphisms of $\Atnf$. For example, the automorphism $\pa
\mapsto \pa + 1$ and $ x \mapsto x$ of $A_1$ do not lift to an
automorphism of $\Atnf$. (See section \ref{known structure groups}
for a description of $\Aut(A_1)$.)

It is possible that any automorphism of $\Atn$ is of the form
$$
x_i \mapsto a_i x_{\sigma_i} , \pa_i \mapsto a^{-1}_i
\pa_{\sigma_i}
$$
for some $a_i \in \C^*$ and $\sigma \in Sym_n$. But we haven't
been able to prove this.

\subsection{Endomorphisms of $\Aqn$ preserve the center}\label{Endomorphisms of Aqn preserve the center}
In the introduction we quoted theorem A) which asserts that an
endomorphism of an Azumaya algebra automatically preserves the
center.

We do not know to what extend this result may generalize to almost
Azumaya algebras, but for the quantized Weyl algebra $\Atn$ we do
have a positive result. This is obtained by modifying the
arguments of \cite{ACvE} and \cite{B-KK} and using the Azumaya
localization $\Atnf$.

\begin{theorem}\label{almost preserve center} Let $\phi \in \End_{R-alg}(\Atn)$. For almost every root of unity $q$ we have $\phi_q(\Zqn) \subseteq \Zqn$.
\end{theorem}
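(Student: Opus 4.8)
The plan is to imitate the reduction‑modulo‑a‑prime argument of \cite{ACvE} and \cite{B-KK}, with specialization to a primitive root of unity playing the role of reduction modulo $p$ and with the genuinely Azumaya localization $\Aqnf$ of lemma \ref{seclem} replacing the Azumaya property of $A_n(\F_p)$, which $\Aqn$ itself only enjoys \emph{almost}.

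First I would reduce to the two‑variable relations. Fix $q\in\Ulp$, $l>1$, and put $P_i=\phi_q(x_i)$, $Q_i=\phi_q(\pa_i)$. Since $\Zqn=\C[x_1^l,\dots,x_n^l,\pa_1^l,\dots,\pa_n^l]$ it suffices to show that $P_i^l=\phi_q(x_i^l)$ and $Q_i^l=\phi_q(\pa_i^l)$ lie in $\Zqn$ for every $i$. As $\phi_q$ is an algebra homomorphism, the $P_i,Q_i$ satisfy the defining relations of $\Aqn$: in particular $[P_i,P_j]=[Q_i,Q_j]=0$, $[Q_j,P_i]=0$ for $i\neq j$, and $[Q_i,P_i]_q=1$. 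From the last relation and $[l]_q=(1-q^l)/(1-q)=0$ one gets, exactly as in \ref{xcxw}, the identities $Q_iP_i^l=P_i^lQ_i$ and $P_iQ_i^l=Q_i^lP_i$. Hence $P_i^l$ and $Q_i^l$ commute with all the generators $P_k,Q_k$ of $\phi_q(\Aqn)$, so that
\begin{equation*}
\phi_q(\Zqn)\subseteq C:=C_{\Aqn}\big(\phi_q(\Aqn)\big),
\end{equation*}
the centralizer of the subalgebra $\phi_q(\Aqn)$ in $\Aqn$. Since $\Zqn\subseteq C$ always, it remains to prove $C=\Zqn$ for almost every $q$.

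The next step is to bring in the Azumaya localization. By proposition \ref{AL} and lemma \ref{seclem}, $\Aqn$ is free of rank $l^{2n}$ over $\Zqn$ with the monomials $\x^\alpha\bpa^\beta$ ($0\le\alpha_i,\beta_i<l$) as a basis, and for $\m$ in the Zariski‑dense open set $\AL(\Aqn)=U_{\flq}$ the fibre $\Aqn/\m\Aqn$ is isomorphic to $M_{l^n}(\C)$, the monomial basis reducing modulo $\m$ to a $\C$‑basis of $M_{l^n}(\C)$ for $\m$ in a dense subset of $U_{\flq}$ (cf. the basis in the proof of proposition \ref{AL}). I claim that for all but finitely many $q$ the representation
\begin{equation*}
\rho_\m:\ \Aqn\xrightarrow{\ \phi_q\ }\Aqn\twoheadrightarrow \Aqn/\m\Aqn\cong M_{l^n}(\C)
\end{equation*}
is irreducible for $\m$ ranging over a dense subset of $U_{\flq}$. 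Granting this, Burnside's theorem gives $\rho_\m(\Aqn)=M_{l^n}(\C)$, so $\rho_\m(C)$ lands in the centre $\C\cdot 1$; writing $c\in C$ as $c=\sum c_{\alpha\beta}\x^\alpha\bpa^\beta$ with $c_{\alpha\beta}\in\Zqn$, the coefficient $c_{\alpha\beta}$ then vanishes at a dense set of $\m$ whenever $(\alpha,\beta)\neq 0$, hence $c_{\alpha\beta}=0$ for $(\alpha,\beta)\neq 0$ and $c\in\Zqn$. Thus $C=\Zqn$ and $\phi_q(\Zqn)\subseteq\Zqn$.

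The main obstacle — the point where one genuinely has to adapt \cite{ACvE}, \cite{B-KK} and use the element $\f$ — is the irreducibility claim together with the identification of the finitely many roots of unity to be discarded. The input to exploit is that $\phi$ is injective on $\Atn$ (proposition \ref{naively false D}(iii)): one should deduce that $\phi_q$, or at least its image $\phi_q(\Aqn)$, stays large enough for all $q$ outside a finite set, and then show that an $l^n$‑dimensional representation of $\Aqn$ on which the $P_i,Q_i$ act ``faithfully'' and whose central data $P_i^l,Q_i^l$ is generic must be irreducible — a statement of the same flavour as proposition \ref{AL} itself. An alternative, possibly cleaner, route would be to prove that $\phi_q$ extends to an endomorphism of the Azumaya algebra $\Aqnf$ for almost every $q$ (i.e. that $\phi_q(\f)$ becomes a unit there), apply Theorem \textbf{A)} to conclude $\phi_q(\Zqnf)\subseteq\Zqnf$, and then intersect with $\Aqn$ using $\Aqn\cap\Zqnf=\Zqn$; the difficulty there is precisely to control the zero locus of the reduced norm of $\phi_q(\f)$ over $\Zqn$ relative to that of $\flq$.
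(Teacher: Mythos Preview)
Your overall architecture is the same as the paper's: pass to the Azumaya localization $\Aqnf$, show that the composite $\rho_\m:\Aqn\xrightarrow{\phi_q}\Aqn\to\Aqnf/\Aqnf\m\cong M_{l^n}(\C)$ is surjective for $\m$ in a dense open set, and deduce from this that the centralizer of $\phi_q(\Aqn)$ collapses to $\Zqn$. The reduction $\phi_q(\Zqn)\subseteq C_{\Aqn}(\phi_q(\Aqn))$ is of course automatic (image of the center lies in the centralizer of the image), so all the content is in the density claim, and you correctly flag this as the main obstacle. The paper's part \textbf{b)} is essentially your Burnside-plus-density argument.

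The genuine gap is that you do not resolve this obstacle; you gesture at injectivity of $\phi$ on $\Atn$ and at the reduced norm of $\phi_q(\f_q)$, but neither is turned into an argument. The paper's missing idea is very concrete: one tracks the single element $\phi(\f)\in\Atn$. If for a given $q$ no $\m$ is ``good'', then the image of $\phi_q(\f_q^l)$ in every fiber $M_{l^n}(\C)$ is nilpotent of uniformly bounded index; since $\bigcap_\m\Aqnf\m=0$ this forces $\phi_q(\f_q)^N=0$ in the domain $\Aqnf$, hence $\phi_q(\f_q)=0$. If this happened for infinitely many roots of unity $q$, then each PBW-coefficient of $\phi(\f)\in\Atn$ (a Laurent polynomial in $t$) would have infinitely many zeros, so $\phi(\f)=0$. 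But $\phi(\f)=\prod_i\bigl(1-(1-t)\phi(x_i)\phi(\pa_i)\bigr)$, which is congruent to $1$ modulo $(1-t)$ and hence nonzero. This is the step your proposal is missing; your instinct to use injectivity is morally right (it also gives $\phi(\f)\neq 0$), but the mechanism is specifically the vanishing of $\phi_q(\f_q)$ forced by failure of irreducibility, together with the passage from ``zero at infinitely many $q$'' to ``identically zero in $\Atn$''.

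Your alternative route---extending $\phi_q$ to an endomorphism of $\Aqnf$ and invoking Theorem~\textbf{A)} directly---is not what the paper does, and in fact cannot work as stated: it would require $\phi_q(\f_q)$ to be a \emph{unit} in $\Aqnf$, but the units there are only scalar multiples of monomials in the $\f_{i,q}$ (cf.~\ref{units in Atnf}), and already for the endomorphism $x\mapsto x,\ \pa\mapsto\pa+\f$ of proposition~\ref{naively false D} one computes $\phi(\f)=\f\cdot(1-(1-t)t^{-1}x)$, which is not of this form. The paper only \emph{post}-composes with the localization $\Aqn\to\Aqnf$; it never extends $\phi_q$ on the source side.
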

\emph{Proof.} \textbf{a)} Let $q$ be a root of unity and consider
the composition
$$\psi_q: \Zqn \hookrightarrow \Aqn
\overset{\phi_q}{\to}  \Aqn \overset{\pi_q}{\to} \Aqnf$$ where
$\pi_q$ is localization. Let
$$D_q = \{\m \in \max \Zqnf; \Zqn \cdot \f^l_q + \phi^{-1}(\Aqnf \m) =
\Zqn\}.$$ \emph{Claim:} For almost all $q$, $D_q$ is a
Zariski-open and dense subset of $\max \Zqnf$.

It is clear that $D_q$ is Zariski-open. Hence it suffices to show
that $D_q \neq \varnothing$. Assume that $\m \notin D_q$, then we
have that $\Zqnf \cdot \f^l_q + \psi^{-1}_q(\Aqnf \m)$ is a proper
ideal. By the nullstellensatz this implies $\psi^{-1}_q(\Aqnf \m)
\subseteq V(\f)$, i.e. that there is an integer $k$, such that
$\psi_q((\f^l_q)^k) \in \Aqnf \cdot \m$. Apriori $k$ depends on
$\m$, but since $\Aqnf/\Aqnf \cdot \m \cong M_n(\C)$ and the
maximal nilpotency degree of any element in $M_n(\C)$ is $n$ we
see that in fact $\psi_q((\f^l_q)^n) \in \Aqnf \cdot \m$. Thus
$$
\psi_q((\f^l_q))^n \in \cap_{\m \notin D_q} (\Aqnf \cdot \m) =
\Aqnf(\cap_{\m \notin D_q} \m)
$$
where the last equality follows from well-known properties of
Azumaya algebras applied to $\Aqnf$. Hence, since the intersection
of all maximal ideals in a polynomial ring is $0$, we either have
that $D_q$ is non-empty or $\psi_q(\f^l_q)^n = 0$. In the latter
case we have $\phi_q(\f_q) = 0$. Now, if $\phi_q(\f_q) = 0$ for
infinitely many roots of unity $q$ then clearly $\phi(\f) = 0$.
This would mean that
$$\prod^n_{i=1} 1-(1-t)\phi(x_i) \phi(\pa_i)
= 0$$ which contradicts the fact that $1$ is not divisible by
$(1-t)$ in $\Atn$. This proves the claim.

\smallskip

\noindent \textbf{b)} Let $q$ be a root of unity such that $D_q$
is dense in $\max \Zqnf$. Let $a \in \Aqn$ and suppose that
$\phi_q(a)$ is not central in $\Aqn$. Then the image of
$\pi_q\circ \phi_q(a)$ is not central in $\Aqnf$. Take $b \in
\Aqnf$ such that $c := [b, \pi_q \circ \phi_q(a)] \neq 0$. Since
$\Aqnf$ is free of finite rank over $\Zqnf$ we see that the set of
$\m \in \max \Zqnf$ such that $\overline{c} \neq 0$ in
$\Aqnf/\Aqnf \m$ is Zariski open and non-void. Thus we can find
such an $\m$ which in addition is in $D_q$. Since $\m \in D_q$ we
have that $\Aqn/\phi^{-1}_q\circ \pi^{-1}_q(\Aqnf \m)$ is Azumaya
over $\Zqn/\psi^{-1}_q(\Aqnf \m)$. Hence there is a maximal ideal
$\m' \in \Zqnf$ such that $\phi_q$ induces an isomorphism
$\overline{\phi}_q: \Aqn/\Aqn \m' \overset{\backsim}{\to}
\Aqnf/\Aqnf \m$. Since both sides are isomorphic to $M_n(\C)$ it
follows from standard properties of matrix rings that $\bar{a}$ is
not central in $\Aqn/\Aqn \m'$. Thus $a \notin \Zqn$. $\Box$.
\begin{remark}
The usual Weyl algebra is a quantization-deformation of a
commutative polynomial ring in $2n$ variables so a quantized Weyl
algebra is actually a second quantization. However, it is known
that the second Hochschild cohomology group $HH^2(A_n,A_n) = 0$,
(see \cite{B-KK}), so $\Atn$ is a trivial formal deformation of
$A_n$ (i.e., if we replace our base ring $R$ by its extension
$\C[[1-t]]$). Because of this it is not evident how one should
conceptually characterize quantized Weyl algebras.

They can be interpreted as Grothendieck rings of differential
operators on quantum spaces using the formalism of braided tensor
categories, see \cite{Majid} and references therein, or, in direct
relation with the representation theory of quantum groups, as
differential operators on Bruhat cells of quantized flag
manifolds, see \cite{J} and \cite{JL}. In the latter context
another application of the Azumaya property of quantized Weyl
algebras to Beilinson-Bernstein localization was given in
\cite{BK}. Note that the quantized Weyl algebras that occur in
relation to representation theory are in general more complicated
than the ones considered in this paper. In fact, we consider in
this note precisely (tensor powers of) the ones that relate to the
representation theory of $U_q(\mathfrak{sl}_2)$.
\end{remark}
\section{Lifting endomorphisms of $A_n$ to $\Atn$}
Here we discuss the problem of lifting an endomorphism $\phi$ of
$A_n$ to an endomorphism $\widetilde{\phi}$ of $\Atn$ (i.e., $\w
\phi$ satisfy $({\w \phi})_1 = \phi$); we conjecture that this is
always possible. The structure of the group $\Aut(A_1)$ is known
(and so is $\Aut(P_1)$). We use this to show that it is possible
to lift any automorphism of $A_1$.
\subsection{The structure of the groups $\Aut(A_1)$ and $\Aut(P_1)$}\label{known structure groups}
Let us start by recalling the structure of the relevant
automorphism groups in the only case where they are known, namely
for $n= 1$.  Let $\F$ be any field and let $H \cong \F[x]$ be the
subgroup of $\Aut(A_1(\F))$ consisting of translations
$$x \mapsto x, \, \pa \mapsto \pa + G(x).$$
for $G(x) \in \F[x]$ and let $K \cong SL_2(\F) \ltimes \F^2$ be
the subgroup of $\Aut(A_1(\F))$ consisting of elements $$ \left(
\begin{array}{cc}
a& b  \\
c & d \\
 \end{array} \right) \times \left(
\begin{array}{cc}
r  \\
s \\
 \end{array} \right): x \mapsto ax+b\pa + r, \; \pa \mapsto cx+d\pa + s.$$
 Then $\Aut(A_1(\F))$ is isomorphic to the amalgamated product of $H$
 and $K$ over their intersection. (For a proof see \cite{D} and \cite{M-L}.)
 There is a similar description of $\Aut(P_1(\F))$, given in \cite{Jung}; thus $\Aut(A_1(\F)) \cong \Aut(P_1(\F))$.

 It follows from the above that $\Aut(A_1(\F))$ is generated by
 \begin{equation}\label{phi, psi}
\phi_{F(x)}: x \mapsto x, \pa \mapsto \pa + F(x), \psi_{G(\pa)}: x
\mapsto x + G(\pa) \hbox{ and } \pa \mapsto \pa
\end{equation}
for  $F(x) \in \F[x], G(\pa) \in \F[\pa]$. Moreover, since
$\phi_{F_1(x)} \circ \phi_{F_2}(x) = \phi_{F_1(x) + F_2(x)}$, it
is enough to let the $F(x)$'s and $G(\pa)$'s run over set of
generators of the additive groups $\F[x]$ and $G[\pa]$,
respectively, to get a generating set for $\Aut(A_1(\F))$ .
Similarly with $\Aut(P_1(\F))$.

\subsection{Lifting conjecture}\label{Lifting conjecture}
\begin{conjecture}\label{ConjA} Any $\phi \in \End(A_n)$
has a lift $\widetilde{\phi} \in \End(\Atn)$.
\end{conjecture}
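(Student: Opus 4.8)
The plan is to build the lift in two stages: first lift $\phi$ over the $(1-t)$-adic completion of the base, where homological rigidity of the Weyl algebra makes a lift automatic, and then \emph{algebraize} that formal lift back down to $\Atn$. Since $HH^2(A_n,A_n)=0$ (see the remark above and \cite{B-KK}), the formal deformation $\widehat{\Atn}=\Atn\otimes_R\C[[1-t]]$ is trivial: there is a $\C[[1-t]]$-algebra isomorphism $\Psi\colon\widehat{\Atn}\xrightarrow{\sim}A_n\otimes_\C\C[[1-t]]$ reducing to the identity modulo $1-t$. Then $\widehat\phi:=\Psi^{-1}\circ(\phi\otimes\operatorname{id})\circ\Psi$ is a $\C[[1-t]]$-algebra endomorphism of $\widehat{\Atn}$ with $\widehat\phi\equiv\phi$ modulo $1-t$, so $\phi$ always lifts after completion. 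The conjecture is thereby reduced to choosing such a lift with $\widehat\phi(x_i),\widehat\phi(\pa_i)\in\Atn$, i.e. with Laurent-polynomial coefficients in $t$ rather than power-series coefficients in $1-t$; and there is a lot of freedom, since for any $u\in 1+(1-t)\widehat{\Atn}$ the conjugate $\operatorname{Ad}(u)\circ\widehat\phi$ is again a formal lift of $\phi$ (and, as $HH^1(A_n,A_n)=0$, conjugations by such $u$ account for much of the ambiguity in $\Psi$).

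The algebraization is where I expect the real work to lie. An $R$-algebra endomorphism of $\Atn$ lifting $\phi$ is the same as a tuple $P_i,Q_i\in\Atn$ with $Q_iP_i-tP_iQ_i=1$, $Q_iP_j=P_jQ_i$ for $i\neq j$, $P_iP_j=P_jP_i$, $Q_iQ_j=Q_jQ_i$, and $P_i\equiv\phi(x_i)$, $Q_i\equiv\phi(\pa_i)$ modulo $1-t$. After fixing an a priori bound on the Bernstein degrees of the $P_i,Q_i$ this becomes a finite system of polynomial equations over $R$, and the first stage supplies a solution over $\C[[1-t]]$, the completion of the excellent discrete valuation ring $R_\m$ with $\m=(1-t)$; Artin--Popescu approximation then yields a solution over $R_\m$ agreeing with the formal one modulo $1-t$, hence a lift of $\phi$ defined over a localization $\C[t,t^{-1},d(t)^{-1}]$ with $d(1)\neq0$. \textbf{The main obstacle} is the last step: removing the denominator $d(t)$ without altering the value at $t=1$. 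One cannot just rescale the $P_i,Q_i$ by $d(t)$, so clearing $d(t)$ must be absorbed into a clever choice of the unit $u$ (equivalently of $\Psi$); I do not know how to do this in general, and even the input that the formal lift may be taken of bounded Bernstein degree needs an argument. It is conceivable this fails for some endomorphism — and for $n=1$ such a failure would force Dixmier's conjecture $\operatorname{D}_1$ to be false, by the unconditional result for automorphisms described next.

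For $n=1$ and $\phi\in\Aut(A_1)$ the lift can be written down explicitly. By the Dixmier--Makar-Limanov structure theorem (\cite{D}, \cite{M-L}), $\Aut(A_1)$ is the amalgam of the translation group $H=\{\phi_{F(x)}\}$ and $K\cong SL_2(\C)\ltimes\C^2$, hence is generated by the automorphisms $\phi_{F(x)}\colon x\mapsto x,\ \pa\mapsto\pa+F(x)$ and $\psi_{G(\pa)}\colon x\mapsto x+G(\pa),\ \pa\mapsto\pa$ of \ref{phi, psi}; since $\pi_1$ is an algebra homomorphism, a composite of lifts lifts the composite, so it is enough to lift these generators. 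Using $\f x=tx\f$ and $\pa\f=t\f\pa$ from \ref{frel} (here $\f=\f_1=[\pa,x]=1-(1-t)x\pa\in\At$ for $n=1$), one checks at once that
\[
x\mapsto x,\qquad \pa\mapsto\pa+F(x)\f
\]
is an $R$-algebra endomorphism of $\At$ lifting $\phi_{F(x)}$ — the only relation to verify, the $t$-commutator, collapses because of the $t$-twisting of $\f$ — and symmetrically
\[
x\mapsto x+\f\,G(\pa),\qquad \pa\mapsto\pa
\]
lifts $\psi_{G(\pa)}$, with $\f\mapsto1$ at $t=1$ in both cases. These lifts are in general not invertible (cf. proposition \ref{naively false D}), but that is all the conjecture requires. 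This settles the conjecture for $n=1$, $\phi\in\Aut(A_1)$; passing to all of $\End(A_1)$ is implied by $\operatorname{D}_1$, and for $n\geq2$ even the automorphism case meets both the tame-generators problem and the algebraization obstacle above.
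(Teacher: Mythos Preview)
This is stated in the paper as a \emph{conjecture}, and the paper does not prove it. The only evidence the paper offers is the $n=1$ automorphism case, handled exactly as you do: reduce via the Dixmier--Makar-Limanov amalgam structure to the generators $\phi_{F(x)}$ and $\psi_{G(\pa)}$ of \ref{phi, psi}, and lift those explicitly by $x\mapsto x,\ \pa\mapsto\pa+F(x)\f$ and $x\mapsto x+\f\,G(\pa),\ \pa\mapsto\pa$. Your treatment of that special case, including the observation that failure for $\End(A_1)$ would contradict $\operatorname{D}_1$, matches the paper's discussion essentially verbatim.

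Your deformation-theoretic outline for general $n$ goes beyond anything the paper attempts and is a reasonable way to organize the problem, but --- as you yourself say --- it does not close. Two remarks on the gaps. First, the bounded-degree hypothesis is a genuine issue: the trivialization $\Psi$ supplied by $HH^2(A_n,A_n)=0$ is built order by order in $(1-t)$, and there is no a priori reason $\Psi(x_i),\Psi(\pa_i)$ have uniformly bounded Bernstein degree across all orders, so one cannot pass to a \emph{finite} system of polynomial equations without further input. Second, even granting a bounded-degree formal solution, Artin--Popescu approximation over the excellent DVR $R_\m$, $\m=(1-t)$, lands you in the \emph{henselization} $R_\m^h$ (algebraic power series in $1-t$), not in $R_\m$ itself; so your passage to a localization $\C[t,t^{-1},d(t)^{-1}]$ is already an extra step needing justification, before one even reaches the denominator-clearing problem you flag as the main obstacle. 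The conjecture therefore remains open in both the paper and your proposal; what you have added is a plausible line of attack together with a clear identification of where it stalls.
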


The evidence we have for this conjecture is that every
automorphism of $A_1$ lifts to an endomorphism of $\At$. It is
enough to construct lifts of the elements $\phi_{F(x)}$ and
$\psi_{G(\pa)}$ from \ref{phi, psi}. This is done by
$\widetilde{\phi}_{F(x)}: x \mapsto x, \pa \mapsto \pa + F(x)\cdot
\f$, $\widetilde{\psi}_{G(\pa)}: x \mapsto x + \f \cdot G(\pa),
\pa \mapsto \pa$.

The lift, if it exists, is not unique of course, e.g., $\Id_{\At}$
and $x \mapsto x, \pa \mapsto \pa + (t-1)\f$ are two lifts of
$\Id_{A_1}$.

Note that if $\operatorname{D}_1$ is true it follows that any
endomorphism of $A_1$ can be lifted to an endomorphism of
$A^{(t)}_1$. This is one good motivation to try to directly prove
the, we suspect, difficult assertion that any endomorphism of
$A_1$ can be lifted. One may speculate that the obstruction groups
to lifting problems that exist in homotopy theory, e.g.,
\cite{CDI}, applied, say, to the model category of dg-algebras,
could help resolving this conjecture; ideally the vanishing of the
relevant obstruction groups could then be derived from
$\mathcal{D}$-module theory.

\section{Quantum approach to Belov-Kanel and Kontsevich's
conjecture} In this section we construct a monoid homomorphism
$\;\widehat{\ }\;: \End(\Atn)' \to \End(P_n)$ where $\End(\Atn)'$
is a certain submonoid of $\End(\Atn)$ and give a conjectural
description of the image of this map. Moreover, given any $\phi
\in \Aut(A_n)$ we sharpen conjecture \ref{ConjA} to conjecture
that there is a lift $\widetilde{\phi}$ in $\End(\Atn)'$.

\subsection{Poisson bracket on $\Zqn$} Just like a deformation
quantization of an algebra (see \cite{K}) or a Weyl algebra in
finite characteristic (see \cite{B-KK2}) admit canonical Poisson
brackets on their centers, we shall construct a Poisson bracket
\begin{equation}\label{Poisson}
\{\; , \; \}_q: \Zqn \times \Zqn \to \Zqn
\end{equation}
when $q$ is a root of unity. This bracket will be degenerate for
any $q$ but in the limit as $q \to 1$ it will be isomorphic to the
standard symplectic bracket on a polynomial ring in $2n$
variables.

To construct $\{\; , \; \}_q$ we proceed as follows. Let $q \in
\Ulp$, $l > 1$. For $P \in \Aqn$, let $\widetilde{P} \in \Atn$
denote any lift of $P$, i.e. $\pi_q(\w P) = P$.

For $P,Q \in \Zqn$ we now define
\begin{equation}\label{Poissondef}
\{P,Q\}_q = \lambda_q \cdot \pi_q({1\over{t-q}}[\widetilde{P},
\widetilde{Q}])
\end{equation}
where $\lambda_q = \pi_q({
 t-q \over {[l]_t!}})$. Note that
$[\widetilde{P}, \widetilde{Q}]$ is divisible by $t-q$ because,
since $P$ is central, we have
$$\pi_{q}([\widetilde{P}, \widetilde{Q}]) =
[\pi_{q}(\widetilde{P}),\pi_{q}(\widetilde{Q})] = 0.$$ A similar
computation shows that $\{P,Q\}_q$ is independent of chosen lifts.
Thus $\{P,Q\}_q$ is well-defined. The Jacobi-identity implies that
$\{P,Q\}_q$ is a central element.

A straight forward computation shows that $\{\; , \;\}_q$
satisfies the axioms of a Poisson bracket, i.e. that it is a Lie
bracket which is a $\C$-linear derivation in each factor with the
other factor fixed.
\begin{lemma}\label{straight}
For $q \in \Ulp$, $l > 1$, we have
$$\{\pa^l_i, x^l_i\}_q = 1 + {l(q-1) \over [l-1]_q!}
x^l_i\pa^l_i$$ for $i = 1, \ldots n$.
\end{lemma}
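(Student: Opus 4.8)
The plan is to reduce immediately to $n=1$ — the commutator $[\widetilde{\pa^l_i},\widetilde{x^l_i}]$ only involves $x_i,\pa_i$ and $\lambda_q$ is independent of $n$ — and then to compute $\{\pa^l,x^l\}_q$ essentially by hand. The two inputs I would prepare are closed product formulas for $\pa^l x^l$ and $x^l\pa^l$, and the observation that at a primitive $l$'th root of unity the powers $q^{j}$ sweep out all of $\Ul$. From \eqref{xcxw} one gets $\pa\cdot(\pa x)=(1+t\,\pa x)\pa$ and $(\pa x)\cdot x=x\,(1+t\,\pa x)$ in $\At$, and a routine induction on $l$ (writing $\pa^l x^l=\pa\cdot(\pa^{l-1}x^{l-1})\cdot x$, using that $\pa^{m}x^{m}$ is a polynomial in $\pa x$, and that $[j]_t+t^j=[j+1]_t$) yields $\pa^l x^l=\prod_{j=1}^{l}([j]_t+t^jx\pa)$; a symmetric induction gives $x^l\pa^l=t^{-\binom{l}{2}}\prod_{j=0}^{l-1}(x\pa-[j]_t)$. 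Substituting $x\pa=(1-\f)/(1-t)$, so that $[j]_t+t^jx\pa=(1-t^j\f)/(1-t)$ and $x\pa-[j]_t=(t^j-\f)/(1-t)$, and passing to the flat localization $\At\otimes_R R_{1-t}$, this becomes
$$[\pa^l,x^l]=\frac{1}{(1-t)^l}\Big(\prod_{j=1}^{l}(1-t^j\f)-\prod_{j=0}^{l-1}(1-t^{-j}\f)\Big).$$

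Next I would unwind \eqref{Poissondef} with the lifts $\widetilde{\pa^l}=\pa^l$, $\widetilde{x^l}=x^l$. As already noted in the text, since $\pi_q(x^l),\pi_q(\pa^l)$ are central the element $[\pa^l,x^l]$ is divisible by $t-q$ in $\At$ and $\pi_q\!\big((t-q)^{-1}[\pa^l,x^l]\big)=\tfrac{d}{dt}[\pa^l,x^l]\big|_{t=q}$. The key point is that, since $q\in\Ulp$, the sets $\{q^{j}:1\le j\le l\}$ and $\{q^{-j}:0\le j\le l-1\}$ both equal $\Ul$, so at $t=q$ both products above specialize to $\prod_{\zeta\in\Ul}(1-\zeta\f)=1-\f^l$ (compare leading coefficient and roots), which by lemma \ref{fm} is $(1-q)^l x^l\pa^l$ after $\pi_q$. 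Differentiating each of the two products in $t$, evaluating at $t=q$, replacing ``$\prod_{\zeta\in\Ul}(1-\zeta\f)$ with the factor $1-q^{k}\f$ deleted'' by $(1-\f^l)/(1-q^{k}\f)$, and recognizing that $\sum_{k=0}^{l-1}\frac{q^{k}}{1-q^{k}\f}$ is minus the logarithmic derivative of $\prod_k(1-q^k\f)=1-\f^l$, i.e. equals $\frac{l\,\f^{l-1}}{1-\f^l}$, the sums collapse and one obtains $\tfrac{d}{dt}[\pa^l,x^l]\big|_{t=q}=-\,l^2q^{-1}(1-q)^{-l}\,\f_q^l$. I expect the index bookkeeping in this step — in particular re-indexing the sum coming from the second product so that the weights $k$ from the first and $l-k$ from the second add to $l$ — to be the place where an error is most likely to creep in, so that is where I would be most careful.

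Finally I would assemble the scalar: $\lambda_q=\pi_q\!\big((t-q)/[l]_t!\big)=\big(\tfrac{d}{dt}[l]_t!\big|_{t=q}\big)^{-1}=\big(\tfrac{d}{dt}[l]_t\big|_{t=q}\cdot[l-1]_q!\big)^{-1}$, where $\tfrac{d}{dt}[l]_t\big|_{t=q}=-l\,q^{-1}/(1-q)$ (differentiate $[l]_t=(1-t^l)/(1-t)$ and use $q^l=1$). Multiplying by the previous expression and rewriting $\f_q^l=1-(1-q)^l x^l\pa^l$ via lemma \ref{fm} gives
$$\{\pa^l,x^l\}_q=\frac{l}{(1-q)^{l-1}[l-1]_q!}+\frac{l(q-1)}{[l-1]_q!}\,x^l\pa^l .$$
To finish I would observe that the constant term is $1$: indeed $(1-q)^{l-1}[l-1]_q!=\prod_{j=1}^{l-1}(1-q^{j})$ is the value at $X=1$ of $\tfrac{X^l-1}{X-1}=1+X+\cdots+X^{l-1}$, namely $l$. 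This is the asserted formula for $i=1$, and the case of general $i$ (and general $n$) follows with no change by the reduction above.
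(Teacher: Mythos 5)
Your proof is correct, and it takes a genuinely different route from the paper's. The paper writes $[\pa^l,x^l]=\sum_{i=0}^{l}a_{i,l}x^i\pa^i$ and, using that $\{P,Q\}_q$ is known a priori to be central, observes that only the constant term $a_{0,l}$ and the leading term $a_{l,l}$ can survive specialization. It then computes $a_{l,l}=t^{l^2}-1$ by inspection and $a_{0,l}=[l]_t!$ by the one-step recursion $\pa^l x^l=\tfrac{1-t^l\f}{1-t}\pa^{l-1}x^{l-1}$ on constant terms. You instead establish the full closed-form factorizations $\pa^l x^l=(1-t)^{-l}\prod_{j=1}^l(1-t^j\f)$ and $x^l\pa^l=(1-t)^{-l}\prod_{j=0}^{l-1}(1-t^{-j}\f)$ and differentiate at $t=q$. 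The "careful bookkeeping" you flag is exactly right: $\f$ itself depends on $t$, so $A'(q)$ and $B'(q)$ each carry an extra term proportional to $\f'=x\pa$ — but because both index sets $\{q^j:1\le j\le l\}$ and $\{q^{-j}:0\le j\le l-1\}$ are all of $\Ul$, these two extra sums are identical and cancel in $A'-B'$. The remaining terms re-index (with weights $k$ and $l-k$ pairing to $l$) and the logarithmic-derivative identity $\sum_{\zeta\in\Ul}\frac{\zeta}{1-\zeta\f}=\frac{l\f^{l-1}}{1-\f^l}$ collapses the sum, giving $g'(q)=-l^2q^{-1}(1-q)^{-l}\f_q^l$ as you state. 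Your route is more computational than the paper's, but it has two compensating virtues: it yields the explicit product formulas for $\pa^lx^l$ and $x^l\pa^l$ (of independent use), and it surfaces the clean reformulation $\{\pa^l_i,x^l_i\}_q=\f_{i,q}^l$, which together with lemma \ref{fm} gives the stated expression instantly and makes manifest that the bracket is central. It also sidesteps the paper's need to first argue (via the Jacobi identity) that only $a_{0,l}$ and $a_{l,l}$ matter.
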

\begin{proof} We can assume $n=1$ and write $\pa = \pa_1$, $x = x_1$.
We get an equality in $\At$ of the form
$$[{ \pa}^l,{ x}^l] = \sum^l_{i=0} a_{i,l} { x}^i { \pa}^i, \; a_{i,l} \in R.$$
From this it follows that in $\Aq$ holds
$$
\{\pa^l,x^l \}_q = \alpha \cdot 1 + \beta \cdot x^l \pa^l
$$
for some $\alpha, \beta \in \C$ and we must show that $\alpha = 1$
and $\beta = {l(q-1) \over [l-1]_q!}$.

It is easy to see that $a_{l,l} = t^{l^2}-1$. Since, $t^{l^2}-1 =
(t^l-1)(1 + t^l + \ldots + t^{l^2-l})$ and $q^l = 1$ we get
$$\beta = \lambda_q \cdot \pi_q({t^{l^2}-1 \over t-q}) = \lambda_q \cdot l(q-1)\cdot \pi_q( {[l]_t \over t-q}) =
{l(q-1) \over [l-1]_q!}.$$

Let $R_{1-t}$ be the localization of $R$ obtained by inverting
$1-t$ and put $\At_{1-t} = \At \otimes_R R_{1-t}$; thus $\At$ is a
subring of $\At_{1-t}$. For any $P \in \At_{1-t}$ we denote by
$c(P) \in R_{1-t}$ its constant term with respect to the basis
$\{x^i\pa^j; i,j \geq 0\}$ of $\At_{1-t}$ over $R_{1-t}$. In
$\At_{1-t}$ we have the equalities ${ \pa}^l{ x}^l =$
$$
{ \pa}^{l-1}(\pa x) x^{l-1} = { \pa}^{l-1}({t\f \over t-1} -{1
\over t-1}){ x}^{l-1} = {t^l\f -1 \over t-1}{ \pa}^{l-1}{
x}^{l-1}.$$ Thus, since $\f$ commutes with ${ \pa}^{l-1}{
x}^{l-1}$ and $c(\f) = 1$, we have
$$a_{0,l} \overset{def}{=}  c({ \pa}^{l} { x}^{l})
= [l]_t \cdot c({ \pa}^{l-1} { x}^{l-1}) = [l]_t \cdot a_{0,l-1}
$$
Since $a_{0,1} = 1$ this shows $a_{0,l} = [l]_t!$. Thus, $ \alpha
= \lambda_q \cdot \pi_q({[l]_t! \over t-q}) = 1.$
\end{proof}
\smallskip

Let $q_l = \exp(2\pi \ii \cdot {1\over l})$, $l \geq 0$. Consider
the standard symplectic bracket $\{\; , \; \}_{\BGG_{2n}}$ on the
ring $\BGG_{2n} = \C[r_1, \ldots , r_n, s_1, \ldots s_n]$ defined
by $\{\phi, \psi\}_{\BGG_{2n}} = \sum^n_{i=1} {\pa\phi \over \pa
r_i} {\pa\psi \over \pa s_i} - {\pa\psi \over \pa r_i} {\pa\phi
\over \pa s_i}$. Let
$$\Theta_l : \BGG_{2n} \overset{\sim}{\to}
Z^{q_l}_n$$
be the algebra isomorphism defined by $\Theta_l(r_i) =
\pa^l_i$ and $\Theta_l(s_i) = x^l_i$. We say that a sequence
$\psi^{(j)} = \sum a^{(j)}_{\alpha, \beta} \x^\alpha \bpa^\beta
\in \Atm$, $j \geq 1$, converges to $\psi = \sum a_{\alpha, \beta}
\x^\alpha \bpa^\beta \in \Atm$ if each coefficient
$a^{(j)}_{\alpha, \beta}$ converges to $a_{\alpha, \beta}$ in the
analytic topology of $\C$. We then have
\begin{proposition}\label{transport} Define a bracket $\{\;, \;\}$ on $\BGG_{2n}$, by
$$
\{P,Q\} = \lim_{l \to \infty} \Theta^{-1}_l(\{\Theta_l(P),
\Theta_l(Q)\}_{q_l}).
$$
Then the bracket $\{\;, \;\}$ is equal to the standard Poisson
bracket $\{\;, \;\}_{\BGG_{2n}}$.
\end{proposition}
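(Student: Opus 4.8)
The plan is to exploit the fact that, for each $l$, transporting $\{\,,\,\}_{q_l}$ along the algebra isomorphism $\Theta_l$ produces an honest Poisson bracket on the polynomial ring $\BGG_{2n}$, hence a biderivation, which is therefore determined by its values on the generators. Writing $z_1,\dots,z_{2n}$ for the list $r_1,\dots,r_n,s_1,\dots,s_n$ and setting $\{P,Q\}^{(l)} := \Theta^{-1}_l(\{\Theta_l(P),\Theta_l(Q)\}_{q_l})$, the Leibniz rule gives
$$\{P,Q\}^{(l)} = \sum_{a,b=1}^{2n}\frac{\partial P}{\partial z_a}\,\frac{\partial Q}{\partial z_b}\,\{z_a,z_b\}^{(l)}.$$
The coefficients $\partial P/\partial z_a$ are fixed polynomials independent of $l$, so once I show that each of the finitely many structure functions $\{z_a,z_b\}^{(l)}$ converges coefficientwise as $l\to\infty$ to the corresponding structure function of $\{\,,\,\}_{\BGG_{2n}}$ — namely $\delta_{ij}$ for the pair $(r_i,s_j)$ and $0$ for the pairs $(r_i,r_j)$ and $(s_i,s_j)$ — the display passes to the limit and yields $\{P,Q\} = \{P,Q\}_{\BGG_{2n}}$; in particular this simultaneously shows that the limit defining $\{\,,\,\}$ exists.

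Next I would compute the structure functions. Using that $\{\,,\,\}_q$ is independent of the chosen lift (recorded just before Lemma \ref{straight}), I lift $\pa^l_i, x^l_i \in Z^{q_l}_n$ to the obvious elements $\pa^l_i, x^l_i \in \Atn$. For $i\neq j$ the commutators $[\pa^l_i,\pa^l_j]$, $[x^l_i,x^l_j]$ and $[\pa^l_i,x^l_j]$ already vanish in $\Atn$, since $\pa_i\pa_j=\pa_j\pa_i$, $x_ix_j=x_jx_i$ and $\pa_ix_j=x_j\pa_i$; hence $\{r_i,r_j\}^{(l)}=\{s_i,s_j\}^{(l)}=\{r_i,s_j\}^{(l)}=0$ identically, matching the standard bracket. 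For the diagonal pairs, Lemma \ref{straight} gives $\{\pa^l_i,x^l_i\}_{q_l}=1+\frac{l(q_l-1)}{[l-1]_{q_l}!}x^l_i\pa^l_i$, and applying $\Theta^{-1}_l$ yields
$$\{r_i,s_i\}^{(l)} = 1 + \frac{l(q_l-1)}{[l-1]_{q_l}!}\,r_is_i.$$

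It then remains to control the scalar $\frac{l(q_l-1)}{[l-1]_{q_l}!}$. The key elementary input is the cyclotomic identity: evaluating $1+x+\cdots+x^{l-1}=\prod_{m=1}^{l-1}(x-q^m_l)$ at $x=1$ gives $\prod_{m=1}^{l-1}(1-q^m_l)=l$, whence
$$[l-1]_{q_l}! = \prod_{m=1}^{l-1}\frac{1-q^m_l}{1-q_l} = \frac{l}{(1-q_l)^{l-1}},\qquad\text{so}\qquad \frac{l(q_l-1)}{[l-1]_{q_l}!} = -(1-q_l)^l.$$
Since $|1-q_l|=2\sin(\pi/l)\to 0$ (and is $<1$ for $l$ large), $(1-q_l)^l\to 0$, so $\{r_i,s_i\}^{(l)}\to 1$ coefficientwise. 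Feeding this into the Leibniz formula, the limit bracket has structure functions $\delta_{ij}$ and $0$, i.e.
$$\{P,Q\} = \sum_i\bigl(\partial_{r_i}P\,\partial_{s_i}Q - \partial_{s_i}P\,\partial_{r_i}Q\bigr) = \{P,Q\}_{\BGG_{2n}}.$$

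I do not expect a genuine obstacle here; the argument is essentially bookkeeping once Lemma \ref{straight} is available. The only two points requiring a little care are: invoking lift-independence so that the convenient lifts $\pa^l_i,x^l_i\in\Atn$ may be used to read off the vanishing of the off-diagonal brackets, and checking that the finite sum in the Leibniz formula really commutes with the coefficientwise limit — which it does precisely because the partial-derivative coefficients are $l$-independent and only finitely many structure functions occur.
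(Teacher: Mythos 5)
Your proof is correct and follows the same overall structure as the paper: reduce to the structure functions on generators via the Leibniz rule, observe that the off-diagonal commutators already vanish in $\Atn$ so those brackets are identically zero, and then feed Lemma \ref{straight} into the diagonal case $\{r_i,s_i\}^{(l)}$. The one place where you genuinely diverge is the final scalar estimate, and your route there is better. You use the cyclotomic identity $\prod_{m=1}^{l-1}(1-q_l^m)=l$ to obtain the closed form
$$\frac{l(q_l-1)}{[l-1]_{q_l}!} = -(1-q_l)^l,$$
from which convergence to $0$ is immediate since $|1-q_l|=2\sin(\pi/l)<1$ for $l\ge 7$. The paper instead argues that $l(q_l-1)\to 2\pi\ii$ while $[l-1]_{q_l}!\to\infty$, justifying the latter by noting $|[a]_{q_l}|\ge 1$ for all $a$ and $|[a]_{q_l}|\ge 2$ for $\frac{l-1}{2}\le a\le l-\frac{l-1}{2}$; as written, that interval contains only one or two integers, so the stated bounds give $|[l-1]_{q_l}!|\ge 4$ rather than divergence, and the paper's estimate is incomplete without strengthening it (e.g.\ observing that $|[a]_{q_l}|=\sin(\pi a/l)/\sin(\pi/l)\ge c$ on a range of $a$ of length proportional to $l$). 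Your exact computation sidesteps this issue entirely, and in fact also yields the paper's claim $\lim_l [l-1]_{q_l}!=\infty$ as a byproduct since $[l-1]_{q_l}!=l/(1-q_l)^{l-1}$.
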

\begin{proof} In order to establish convergence of $\{\;,\;\}$ and the
prescribed equality of brackets it is enough to prove that
$\{r_i,s_i\} = 1$, for $1 \leq i \leq n$ (because it is trivially
true that the bracket $\{ \ , \ \}$ vanish on all other pairs of
elements in the set $\{r_1, \ldots, r_n, s_1, \ldots , s_n\}$.) We
have $\{r_i, s_i\} =$
$$
\lim_{l \to \infty} \Theta_l^{-1}(\{\Theta_l(r_i),
\Theta_l(s_i)\}_{q_l}) = \lim_{l \to \infty}
\Theta_l^{-1}(\{\pa^l_i, x^l_i\}_{q_l}) = \hbox{[by
\ref{straight}]} =
$$
$$
\lim_{l \to \infty} \Theta^{-1}_l(1+  {l(q-1) \over
\cdot[l-1]_{q_l}!}x^l_i\pa^l_i) = \lim_{l \to \infty} 1+  {l(q-1)
\over [l-1]_{q_l}!}s_i r_i = 1
$$
The last inequality holds since $\lim_{l \to \infty} l(q_l-1) =
2\pi \ii$ and $\lim_{l \to \infty} [l-1]_{q_l}! = \infty$, because
for any $1 \leq a \leq l-1$, we have $\vert [a]_{q_l}\vert \geq 1$
and for ${l-1 \over 2} \leq a \leq l-{l-1 \over 2}$, we have
$\vert [a]_q \vert \geq 2$, if say $l > 6$.
\end{proof}

\subsection{The map $\;\widehat{\ }\;: \End(\Atn)' \to \End(P_n)$.}

Let $\psi \in \End(\Atn)$. By theorem \ref{almost preserve center}
we know that for almost all roots of unity $q$ we have
$\psi_q|_{\Zqn} \in \End(\Zqn)$.

We say that $\psi$ has degree $\leq N$ (with respect to the
Bernstein filtration) if $$\deg \psi(x_i), \deg \psi(\pa_i) \leq
N, \ \forall i.$$

If $\psi_q|_{\Zqn} \in \End(\Zqn)$, $q^l = 1$, we see that $
\Theta^{-1}_l\psi_{q_l}(\Theta_l(r_i))$ and
$\Theta^{-1}_l\psi_{q_l}(\Theta_l(s_i))$ are polynomials of total
degree $\leq N$ for all $i$. Hence, if $P \in \BGG_{2n}$ ha degree
$\leq m$ we see that $\Theta^{-1}_l\psi_{q_l}(\Theta_l(P))$ is a
polynomial of degree $\leq mN$.

We define $\widehat{\psi}  \in \End(\BGG_{2n})$, by
$$
\widehat{\psi}(P) = \lim_{l \to \infty, \; l \, prime}
\Theta^{-1}_l\psi_{q_l}(\Theta_l(P)), \ P \in \BGG_{2n}
$$
if each coefficient of this polynomial converges in the analytic
topology of $\C$.
\begin{proposition}\label{properties of hat} \textbf{a)} If $\hat{\psi}(P), \hat{\psi}(Q)$ converges for some
$P,Q \in \BGG_{2n}$ then $\hat{\psi}(\{P,Q\})$ also converges and
equals $\{\hat{\psi}(P), \hat{\psi}(Q)\}$. \textbf{ b)} If
$\hat{\psi}(x_i)$ and $\hat{\psi}(\pa_i)$ converges for all $i$,
then $\hat{\psi}(P)$ converges for all $P \in \BGG_{2n}$. \textbf{
c)} If $\hat{\psi}$ and $\hat{\phi}$ converges then $\widehat{\psi
\circ \phi} = \widehat{\psi} \circ \widehat \phi$ converge.
\textbf{ d)} If $\psi \in \Aut(\Atn)$ and $\widehat \psi$
converges then $\widehat \psi$ is invertible and $\widehat
\psi^{-1} = \widehat{\psi^{-1}}$ converges.
\end{proposition}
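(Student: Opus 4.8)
The plan is to reduce everything to properties of the ring endomorphisms $\Psi_l:=\Theta_l^{-1}\circ\psi_{q_l}|_{\Zqn}\circ\Theta_l$ of $\BGG_{2n}$ (well defined for almost all primes $l$ by Theorem \ref{almost preserve center}), so that $\widehat{\psi}(P)=\lim_l\Psi_l(P)$, and likewise of the endomorphisms $\Phi_l$ attached to $\phi$. The crucial elementary fact is a \emph{uniform degree bound}: if $N=\max_i\{\deg\psi(x_i),\deg\psi(\pa_i)\}$ in the Bernstein filtration, then, because $\Theta_l$ scales Bernstein degree by $l$ and $\psi_{q_l}$ scales it by at most $N$, each $\Psi_l(r_i)$ and $\Psi_l(s_i)$ has degree $\le N$ and $\Psi_l(P)$ has degree $\le N\deg P$, independently of $l$. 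Together with the fact that each $\Psi_l$ is a ring homomorphism this yields a \emph{substitution-continuity} principle: if $F_l\in\End(\BGG_{2n})$ satisfy $\deg F_l(r_i),\deg F_l(s_i)\le c$ for all $l$ and $F_l(r_i)\to g_i$, $F_l(s_i)\to h_i$ coefficientwise, then $F_l(P)=P(F_l(r),F_l(s))\to P(g,h)$ for every $P$, and $P\mapsto P(g,h)$ is a ring endomorphism (substituting finitely many convergent polynomials of bounded degree into a fixed polynomial is coefficientwise continuous). Part \textbf{b)} is immediate from this. For part \textbf{c)}, functoriality of specialization gives $(\psi\circ\phi)_{q_l}=\psi_{q_l}\circ\phi_{q_l}$, so the endomorphisms attached to $\psi\circ\phi$ are $\Psi_l\circ\Phi_l$; since $\Phi_l(P)\to\widehat{\phi}(P)$ with uniformly bounded degree and $\Psi_l$ is a ring endomorphism whose values on the generators converge, substitution-continuity gives $\Psi_l(\Phi_l(P))\to\widehat{\psi}(\widehat{\phi}(P))$, i.e. $\widehat{\psi\circ\phi}=\widehat{\psi}\circ\widehat{\phi}$ converges.

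For part \textbf{a)} the algebraic input is that a ring endomorphism of $\Atn$ intertwines the bracket \eqref{Poissondef}: since $\psi([\widetilde A,\widetilde B])=[\psi(\widetilde A),\psi(\widetilde B)]$ and $\psi(\widetilde A)$ is a lift of $\psi_q(A)$, which lies in $\Zqn$ for almost all $q$ by Theorem \ref{almost preserve center}, the definition \eqref{Poissondef} gives $\psi_q(\{A,B\}_q)=\{\psi_q(A),\psi_q(B)\}_q$ for $A,B\in\Zqn$ and almost all roots of unity $q$. Transporting through $\Theta_l$ and writing $\{\,\cdot,\cdot\,\}^{(l)}$ for $\Theta_l^{-1}(\{\Theta_l(\cdot),\Theta_l(\cdot)\}_{q_l})$, this reads $\Psi_l(\{P,Q\}^{(l)})=\{\Psi_l(P),\Psi_l(Q)\}^{(l)}$. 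By Lemma \ref{straight} and the proof of Proposition \ref{transport}, $\{\,\cdot,\cdot\,\}^{(l)}=\sum_i(1+\epsilon_l s_ir_i)\,\pa_{r_i}\wedge\pa_{s_i}$ with $\epsilon_l\to 0$, a bidifferential operator with uniformly bounded coefficients tending to the standard bracket. Assuming $\widehat{\psi}(P)$ and $\widehat{\psi}(Q)$ converge, the right-hand side converges to $\{\widehat{\psi}(P),\widehat{\psi}(Q)\}_{\BGG_{2n}}$; expanding the left-hand side as $\Psi_l(\{P,Q\}_{\BGG_{2n}})+\epsilon_l\,\Psi_l(R_{P,Q})$, where $R_{P,Q}=\sum_i s_ir_i(\pa_{r_i}P\,\pa_{s_i}Q-\pa_{s_i}P\,\pa_{r_i}Q)$ is a fixed polynomial, and verifying $\epsilon_l\,\Psi_l(R_{P,Q})\to 0$, we obtain that $\widehat{\psi}(\{P,Q\})=\lim_l\Psi_l(\{P,Q\})$ converges and equals $\{\widehat{\psi}(P),\widehat{\psi}(Q)\}$. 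The delicate point is the control of $\epsilon_l\,\Psi_l(R_{P,Q})$: it has uniformly bounded degree but a priori unbounded coefficients, so one must check that these coefficients, produced by normal-ordering an $l$-th power in $\Aqn$, grow more slowly than $\epsilon_l^{-1}\sim|[l-1]_{q_l}!|$ — which is automatic when $\widehat{\psi}$ is already known to converge on all of $\BGG_{2n}$, since then $\Psi_l(R_{P,Q})$ has bounded coefficients.

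For part \textbf{d)} one first reduces, using part \textbf{c)} and the evident identity $\widehat{\Id}=\Id$, to proving that $\widehat{\psi^{-1}}$ converges: granting this, $\widehat{\psi}\circ\widehat{\psi^{-1}}=\widehat{\psi\circ\psi^{-1}}=\Id$ and symmetrically, so $\widehat{\psi}$ is invertible with inverse $\widehat{\psi^{-1}}$. Now $\psi\in\Aut(\Atn)$ makes each $\Psi_l$ an automorphism of $\BGG_{2n}$, and $(\psi^{-1})_{q_l}=(\psi_{q_l})^{-1}$ shows that the automorphisms attached to $\psi^{-1}$ are $\Psi_l^{-1}$, so $\widehat{\psi^{-1}}(P)=\lim_l\Psi_l^{-1}(P)$; moreover the classical bound on the degree of the inverse of a polynomial automorphism of $\C^{2n}$ gives $\deg(\Psi_l^{-1})\le N^{2n-1}$, uniformly in $l$. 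The heart of the matter is that $\widehat{\psi}$ is itself a polynomial automorphism, which I would prove as follows. Each $\Psi_l$ is a Poisson morphism for the bivector $\sum_i(1+\epsilon_l s_ir_i)\,\pa_{r_i}\wedge\pa_{s_i}$; this forces $\det(J\Psi_l)^2=\prod_i(1+\epsilon_l\Psi_l(s_ir_i))^2\big/\prod_i(1+\epsilon_l s_ir_i)^2$, which tends to $1$ (using that $\widehat{\psi}$, hence $\widehat{\psi}(s_ir_i)$, converges), so $\det(J\widehat{\psi})^2=1$ and $\widehat{\psi}$ is étale with constant Jacobian. Since $\Psi_l\to\widehat{\psi}$ uniformly on compacta and each $\Psi_l$ is bijective, a local-degree count over a generic point of the image shows $\widehat{\psi}$ has a single preimage there, hence is birational onto its image; an étale birational morphism $\C^{2n}\to\C^{2n}$ is an open immersion onto an open subset $V$ isomorphic to $\C^{2n}$, and since $\mathcal{O}(V)$ then has no non-constant units while the complement of an affine open in affine space is pure of codimension one, $V=\C^{2n}$ and $\widehat{\psi}\in\Aut(\BGG_{2n})$. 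Finally, with $\widehat{\psi}$ an automorphism and the $\Psi_l^{-1}$ of uniformly bounded degree, a compactness argument — bounded degree, bounded coefficients, and uniqueness of the inverse via $\Psi_l\circ\Psi_l^{-1}=\Id$ — yields $\Psi_l^{-1}\to\widehat{\psi}^{-1}$, which completes \textbf{d)}.

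The two steps I expect to resist are: in \textbf{a)}, the quantitative control of $\epsilon_l\,\Psi_l(R_{P,Q})$ under the weak hypothesis that only $\widehat{\psi}(P)$ and $\widehat{\psi}(Q)$ — rather than $\widehat{\psi}$ on the generators — converge; and in \textbf{d)}, upgrading ``limit of bounded-degree polynomial automorphisms that is étale'' to ``polynomial automorphism'', and then passing to inverses. Everything else is bookkeeping on the uniform degree bound and substitution-continuity, together with the multiplicativity and functoriality of specialization.
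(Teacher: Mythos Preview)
Your approach matches the paper's in outline but is far more detailed: the paper dispatches all four parts in four lines, saying for \textbf{a)} only that the Poisson bracket was defined via the Lie bracket together with Proposition~\ref{transport}, for \textbf{b)} only that each $\Theta_l$ is a homomorphism, and for \textbf{c)} and \textbf{d)} only the word ``standard.'' For \textbf{b)} and \textbf{c)} your substitution-continuity argument is exactly the content the paper is gesturing at, so there is no divergence there.

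Where you go further is in honestly flagging the two genuine difficulties the paper glosses over. In \textbf{a)}, your worry about the term $\epsilon_l\,\Psi_l(R_{P,Q})$ under the \emph{weak} hypothesis (only $\widehat{\psi}(P)$ and $\widehat{\psi}(Q)$ assumed to converge) is legitimate: nothing in the paper's one-line proof controls it, and your own fix (``automatic when $\widehat{\psi}$ is already known to converge on all of $\BGG_{2n}$'') is precisely the extra hypothesis one needs --- which is also the only situation in which the paper ever applies the proposition. So the statement of \textbf{a)} is arguably stronger than what either proof actually delivers. In \textbf{d)}, the paper's ``standard'' hides real work, and your route (Jacobian computation from the Poisson-intertwining property, then \'etale plus a degree-count/birationality argument and Zariski's Main Theorem, then a compactness/uniqueness argument for the inverses using the uniform bound $\deg(\Psi_l^{-1})\le N^{2n-1}$) is a reasonable way to fill the gap, though heavier than anything the paper supplies. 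The step most in need of care is the passage from ``coefficientwise limit of polynomial automorphisms with constant nonzero Jacobian'' to ``polynomial automorphism''; your local-degree/birationality sketch is plausible but would benefit from being written out, since a limit of automorphisms can in principle degenerate (e.g.\ collapse preimages) and one really uses the uniform degree bound and the \'etaleness together.
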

\begin{proof}  \textbf{a)} holds since the Poisson bracket on
$\Zqn$ was defined in terms of the Lie bracket on  the associative
ring $\Atn$ and because of proposition \ref{transport}.

\textbf{ b)} holds since each $\Theta_l$ is a homomorphism.
\textbf{ c)} and \textbf{ d)} are standard.
\end{proof}
Let $\Aut(\Atn)'$ (resp., $\End(\Atn)'$) be the subgroup of
$\Aut(\Atn)$ (resp., submonoid of $\End(\Atn)$) consisting of
convergent $\psi$'s. Thus we have a morphism of monoids
\begin{equation}\label{ring homomom}
\;\widehat{\ }\;: \End(\Atn)' \to \End(P_n)
\end{equation}
\subsection{Main conjectures}\label{Main conjectures}
We attempt to describe the image of $\;\widehat{\ }\;$ of
\ref{ring homomom}. The motivation for this conjectural
description will be based on an explicit computation for the case
$n = 1$ in the next section. (Let us remark that the group
$\Aut(\Atn)'$ seems to be too small to work with.)

Recall that conjecturally $\End(P_n) = \Aut(P_n)$ holds, so we
would expect that  $\;\widehat{ \ }\;$ takes values in the group
$\Aut(P_n)$. For any unitary commutative ring $\Lambda$ and $N \in
\N$, let $\Aut(P_n(\Lambda))^{\leq N}$ be the of all elements $g
\in \Aut(P_n(\Lambda))$ such that the polynomials $g(r_i), g(s_i),
g^{-1}(r_i)$ and $g^{-1}(s_i)$, $1 \leq i \leq n$,  all have total
degree $\leq N$. Let us remark that one can show that the
assignments $\Lambda \mapsto \Aut(P_n(\Lambda))^{\leq N}$ define
an ind-group scheme $\underline{\Aut}({P}_n)$ of (ind-)finite type
over $\Z$ such that $\underline{\Aut}({P}_n)(\Lambda) =
\Aut(P_n(\Lambda))$, see \cite{B-KK2}.
\begin{conjecture}\label{hat is surjective} The image of the map $\;\widehat{\ }\;$ of \ref{ring homomom} is contained in $\Aut(P_n(\Z))$.
Moreover, for any $N \in \N$, the composition
$$\Aut(P_n(\Z))^{\leq  N} \hookrightarrow  \Aut(P_n(\Z)) \to
\Aut(P_n(\Z))/\Jm \;\widehat{\ }\;$$ has finite image.
\end{conjecture}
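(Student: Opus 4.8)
The plan is to reduce both assertions to a generating set of $\Aut(A_n)$, exploiting the conjectural lift $\Aut(A_n)\to\End(\Atn)'$ of conjecture \ref{ConjA'} and the multiplicativity of $\widehat{\ }$ from proposition \ref{properties of hat}, and then to compute $\widehat{\ }$ on those generators by a root-of-unity calculation. As things stand this strategy only settles $n=1$, and even there only granted the explicit evaluation of proposition \ref{evidence at 1}; the general case needs genuinely new input.

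For $n=1$ one knows $\End(P_1)=\Aut(P_1)$ (the $n=1$ Jacobian conjecture), so the image of $\widehat{\ }$ automatically lies in $\Aut(P_1(\C))$ and only integrality and the size statement remain. By \ref{phi, psi}, $\Aut(A_1)$ is generated by the translations $\phi_{F(x)},\psi_{G(\pa)}$ and the affine subgroup $SL_2(\Z)\ltimes\Z^2$, and one may take $F,G$ to run over the monomials $x^k,\pa^k$. The first step is to fix convergent lifts $\w\phi_{x^k},\w\psi_{\pa^k}\in\End(\At)'$ --- their existence being part of proposition \ref{evidence at 1} --- and to evaluate $\widehat{\ }$ on them: expand $\w\phi_{x^k}(\pa^l)$ in the PBW basis \ref{ewertg} using \ref{xcxw}, \ref{frel} and lemma \ref{fm}, project to the center $\Zqn$, apply $\Theta_l^{-1}$, and take the limit along primes $l\to\infty$. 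The expected output is $\widehat{\w\phi_{x^k}}=(r\mapsto r+a_k s^k,\ s\mapsto s)$ and $\widehat{\w\psi_{\pa^k}}=(s\mapsto s+b_k r^k,\ r\mapsto r)$ for nonzero integers $a_k,b_k$, together with the affine part of $\Aut(A_1)$ mapping onto a finite-index subgroup $\Gamma\le SL_2(\Z)\ltimes\Z^2$. Granting this, the subgroup of $\Aut(P_1(\Z))$ generated by these maps has integer coefficients and contains the subgroup $H'\le H$ spanned by the $r\mapsto r+a_k s^k$ (so that $H/H'\cong\bigoplus_k\Z/a_k$), the analogous $K'\le K$, and $\Gamma$; this yields the first assertion for the part of the image coming from $\Aut(A_1)$, the occurrence of integers $a_k,b_k$ rather than units being the residue of the Frobenius untwist that is unavailable over $\C$.

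For the size statement one uses the amalgamated-product structure $\Aut(P_1(\Z))=H\ast_{H\cap K}K$ from \ref{known structure groups}, together with the multiplicativity of the degree along reduced words (affine syllables contributing $1$). Hence $g\in\Aut(P_1(\Z))^{\le N}$ has a reduced normal form $w_1\cdots w_m$ with $m$ bounded in terms of $N$, each $H$-syllable lying in the bounded piece $H^{\le N}=\bigoplus_{k\le N}\Z\,s^k$, and each $K$-syllable arbitrary in $SL_2(\Z)\ltimes\Z^2$. Since $H^{\le N}$ has finite image in $H/H'$ and $SL_2(\Z)\ltimes\Z^2$ has finite image in $(SL_2(\Z)\ltimes\Z^2)/\Gamma$, each syllable represents one of finitely many classes modulo $\Jm\widehat{\ }$; an induction on $m$ that absorbs the leading syllable into $\Jm\widehat{\ }$ at each step --- care being needed since $\Jm\widehat{\ }$ need not be normal --- then leaves finitely many possibilities for $g$, giving finiteness of the image of $\Aut(P_1(\Z))^{\le N}$ in $\Aut(P_1(\Z))/\Jm\widehat{\ }$.

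The hard part is $n\ge 2$: there is no known generating set for $\Aut(A_n)$ (tameness is open), the lifting conjecture \ref{ConjA'} is unavailable, and the Poisson Jacobian conjecture $\End(P_n)=\Aut(P_n)$ --- needed to control the full image of $\widehat{\ }$ and not just the part coming from $\Aut(A_n)$ --- is open as well; most seriously, there is no structure theory of $\Aut(P_n(\Z))$ to replace the amalgamated-product normal form, so the step from bounded degree to bounded word length in controlled generators has no substitute. In this sense the conjecture for general $n$ appears to be at least as deep as the $\operatorname{B-KK}_n$-conjecture itself.
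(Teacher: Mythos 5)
This statement is a conjecture in the paper; there is no proof to compare against. The paper's stated evidence consists of Corollary \ref{image at 1} (the image of $\widehat{\ }$ contains $H$ when $n=1$) and the remark following it that $\widehat{\ }(B \cap \End(\At)') \subseteq \Aut(P_n(\Z))$, which the paper explicitly calls ``an indication (though not a proof).'' You correctly present your proposal as a strategy sketch rather than a proof, so the relevant comparison is with that evidence. Your $n=1$ discussion overlaps with the paper's: the evaluation of $\widehat{\ }$ on lifts of the translation generators is Proposition \ref{evidence at 1} and recovers Corollary \ref{image at 1}, while for the finiteness of the image of $\Aut(P_1(\Z))^{\leq N}$ modulo $\Jm\widehat{\ }$ the paper's sketch around \ref{im 2} is a one-line reduction over $\Q$ plus ``clearing denominators,'' whereas your amalgamated-product normal-form induction is a more explicit and genuinely different route to the same statement; that elaboration has value.

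There are, however, real gaps. The first assertion of the conjecture concerns the image of $\widehat{\ }$ on all of $\End(\Atn)'$, not only on convergent lifts of elements of $\Aut(A_n)$; your argument addresses only the latter, and the paper itself stops at an indication for the full containment. Your appeal to ``$\End(P_1) = \Aut(P_1)$ (the $n=1$ Jacobian conjecture)'' is not legitimate: an element of $\End(P_1)$ is a polynomial endomorphism of $\C[r,s]$ preserving the Poisson bracket, i.e.\ a polynomial self-map of $\C^2$ with constant Jacobian, so this is a two-variable symplectic Jacobian (Poisson) conjecture, open and on the level of $\operatorname{D}_1$, not the trivially true one-variable $\operatorname{J}_1$. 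It is not central to your argument, but it should not be used to place the image inside $\Aut$ rather than $\End$ for free. Also, Proposition \ref{evidence at 1} forces your constants $a_k, b_k$ into $\{\pm 1\}$ (for $\lambda \in \Z$ the limit $\lim_{l \to \infty,\, l\ \mathrm{prime}} \lambda^l$ converges only for $\lambda \in \{-1,0,1\}$), so $H'=H$ in your notation and the non-unit residues you anticipate do not arise; this is consistent with Corollary \ref{image at 1}. Your assessment of $n\geq 2$ --- no generating set for $\Aut(A_n)$, no lifting theorem, no structure theory for $\Aut(P_n(\Z))$ --- matches the paper's.
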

This conjecture would imply that $\Jm \;\widehat{\ }\;$ is Zariski
dense in $\Aut(P_n)$ (in the ``ind-sense"). It is also plausible
that $\Jm \;\widehat{\ }\; = \Aut(P_n(\Z))$.

We give some variants of the lifting conjecture \ref{ConjA}.
\begin{conjecture}\label{ConjA'} Any $\phi \in \End(A_n)$
has a lift $\widetilde{\phi} \in \End_{R-alg}(\Atn)'$ .
\end{conjecture}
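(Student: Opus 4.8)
\emph{Proof proposal.} The statement bundles together two essentially independent difficulties, and the plan is to separate them: first produce \emph{some} lift $\w\phi\in\End_{R-alg}(\Atn)$ of $\phi$ --- this is conjecture \ref{ConjA} --- and then, using the non-uniqueness of lifts, refine the choice so that $\w\phi$ is convergent, i.e.\ lies in $\End_{R-alg}(\Atn)'$. The guiding principle for the first difficulty is the vanishing $HH^2(A_n,A_n)=0$ recorded in the remark following theorem \ref{almost preserve center}: it says that $A_n$ admits no non-trivial formal deformations, and in particular that the $t$-deformed Weyl relations can be solved order by order in $(1-t)$. The genuine content of conjecture \ref{ConjA} is therefore \emph{algebraicity}: arranging that such a formal lift can be taken with coefficients in $R=\C[t,t^{-1}]$ rather than only in $\C[[1-t]]$.

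For this first step I would deform along the order of vanishing in $(1-t)$. Writing $\phi(x_i)=P_i$ and $\phi(\pa_i)=Q_i$ in $A_n$, one seeks $\w P_i = P_i+\sum_{k\ge 1}(1-t)^k P_i^{(k)}$ and $\w Q_i = Q_i+\sum_{k\ge 1}(1-t)^k Q_i^{(k)}$, with $P_i^{(k)},Q_i^{(k)}\in\operatorname{span}_{\C}\{\x^\alpha\bpa^\beta\}$, satisfying $[\w Q_i,\w P_j]_t=\delta_{ij}$ and $[\w P_i,\w P_j]=[\w Q_i,\w Q_j]=0$, solved recursively in $k$. At each stage the unknowns enter affinely, with homogeneous part governed by a Hochschild coboundary operator on $A_n$, and the obstruction to passing from stage $k-1$ to stage $k$ is a Hochschild $2$-cocycle whose class sits in $HH^2(A_n,A_n)=0$; so a solution always exists. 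To convert this formal lift into an honest element of $\End_{R-alg}(\Atn)$ I would use the Bernstein filtration on $A_n$ together with the commutation rules \ref{frel}--\ref{frelgeneral} for $\f$ to normalize the $P_i^{(k)},Q_i^{(k)}$, aiming at a uniform bound on $\deg P_i^{(k)},\deg Q_i^{(k)}$ and, after conjugating by a suitable unit of $\widehat{\Atn}$, termination of the series after finitely many steps. (For $n=1$ and $\phi\in\Aut(A_1)$ this is already carried out in \S\ref{Lifting conjecture} by lifting the generators $\phi_{F(x)},\psi_{G(\pa)}$ via $\pa\mapsto\pa+F(x)\f$ and $x\mapsto x+\f\,G(\pa)$; the task is to remove the restrictions on $n$ and on invertibility.) An alternative worth pursuing is a genuinely homotopical obstruction theory, as speculated after conjecture \ref{ConjA}, in the model category of dg-algebras.

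Granting a lift $\w\phi\in\End_{R-alg}(\Atn)$, the second step is to make it convergent. Here one exploits the freedom in the choice of $\w\phi$ --- recall that $\Id_{\At}$ and $x\mapsto x,\ \pa\mapsto\pa+(t-1)\f$ are both lifts of $\Id_{A_1}$ --- to arrange that the PBW-coefficients of $\w\phi(x_i)$ and $\w\phi(\pa_i)$ are polynomials in $t$ of controlled degree. By theorem \ref{almost preserve center} one then has $\w\phi_{q_l}(\Zqn)\subseteq\Zqn$ for almost all $l$, so that $\Theta_l^{-1}\w\phi_{q_l}\Theta_l(r_i)=\Theta_l^{-1}\big((\w\phi_{q_l}(\pa_i))^l\big)$ and the analogous expression in $s_i$ are polynomials in $r_1,\dots,r_n,s_1,\dots,s_n$; by proposition \ref{properties of hat}\,\textbf{b)} it suffices to show that these converge coefficientwise in the analytic topology as $l\to\infty$. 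As in lemmas \ref{fm} and \ref{straight} and in proposition \ref{transport}, the $l$-th powers can be computed explicitly from the $q$-commutation relations \ref{xcxw}, and the scalars that occur are rational functions of $q_l$ assembled from quantities such as $[a]_{q_l}$, $[l]_{q_l}!$ and $l(q_l-1)$; convergence would then follow from the asymptotics $\lim_{l\to\infty}l(q_l-1)=2\pi\ii$ and $[l-1]_{q_l}!\to\infty$ used already in proposition \ref{transport}, together with the elementary bound $|[a]_{q_l}|\ge 1$. This is the pattern verified for $n=1$ in \S\ref{Main conjectures}.

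The principal obstacle is the first step in full generality. For $\Aut(A_1)$ one has an explicit generating set and explicit lifts; but $\End(A_n)$ has no known presentation --- describing it is precisely the subject of Dixmier's conjecture --- so one cannot reduce to lifting on generators, and even with the formal lift supplied by $HH^2(A_n,A_n)=0$, forcing algebraicity (Laurent-polynomial rather than power-series dependence on $t$) is the step I do not see how to carry out unconditionally. The convergence step is more tractable but also genuinely open beyond the cases computed for $n=1$: as the paper itself observes, it is not even clear that the present notion of limit is the right one, and for a carelessly chosen lift the rational expressions in $q_l$ above need not stabilize --- so a substantial part of the work is to isolate a class of ``good'' lifts for which convergence is automatic, ideally compatibly with step \textbf{1)} of \S\ref{intro quant}.
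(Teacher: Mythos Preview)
The statement you are addressing is labeled a \emph{conjecture} in the paper (conjecture~\ref{ConjA'}), and the paper offers no proof of it. The only evidence given is the explicit lifting of the generators of $\Aut(A_1)$ in \S\ref{Lifting conjecture} together with the convergence computation of proposition~\ref{evidence at 1}; beyond that the paper treats the statement as open. There is therefore no argument of the paper's to compare your proposal against.

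Your outline is a reasonable research programme and is consistent with the paper's own framing: the formal solvability via $HH^2(A_n,A_n)=0$, the genuine content being algebraicity over $R=\C[t,t^{-1}]$ rather than $\C[[1-t]]$, the appeal to the Bernstein filtration and to the asymptotics $l(q_l-1)\to 2\pi\ii$, $[l-1]_{q_l}!\to\infty$ from proposition~\ref{transport}. But, as you yourself say, the algebraicity step is not carried out --- the claim that after ``conjugating by a suitable unit of $\widehat{\Atn}$'' the formal series terminates is precisely the missing idea, and nothing in the paper or in your proposal supplies it. Likewise, the convergence step is only verified in the paper for the specific lifts of $\phi_{\lambda x^m}$ and $\psi_{\lambda\pa^m}$ with $n=1$; your assertion that the general coefficients are ``rational functions of $q_l$'' whose limits are controlled by the same asymptotics is plausible but unproved. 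So what you have written is a strategy with acknowledged gaps, matching the paper's status for this statement: conjectural.
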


\begin{conjecture}\label{ConjA''} For any $g \in \Jm \;\widehat{\ }\;$ there is a $\phi \in \Aut(A_n(\Z))$
which has a lift $\widetilde{\phi} \in \End(\Atn(\Z))'$ such that
$g = \widehat{\widetilde{\phi}}$.
\end{conjecture}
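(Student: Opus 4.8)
The plan is to reformulate the statement as an inclusion of submonoids of $\End(P_n)$, dispose of the case $n=1$ using the known structure of $\Aut(A_1)$ together with the one-variable computations, and then isolate the case $n\ge 2$ as the genuine obstruction. Two formal facts make the reformulation work. By Proposition~\ref{properties of hat}(c) the map $\;\widehat{\ }\;$ is a homomorphism of monoids on $\End(\Atn)'$, and since $\pi_1$ commutes with composition a composite $\widetilde{\phi}_1\circ\widetilde{\phi}_2$ of lifts of $\phi_1,\phi_2\in\Aut(A_n(\Z))$ is again a lift — of $\phi_1\circ\phi_2$ — lying in $\End(\Atn(\Z))'$. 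Hence
$$
S\ :=\ \{\,\widehat{\widetilde{\phi}}\ :\ \phi\in\Aut(A_n(\Z)),\ \widetilde{\phi}\in\End(\Atn(\Z))'\text{ a lift of }\phi\,\}\ \subseteq\ \End(P_n)
$$
is a submonoid, it is contained in $\Jm\,\widehat{\ }\;$, and Conjecture~\ref{ConjA''} is exactly the reverse inclusion $\Jm\,\widehat{\ }\;\subseteq S$. So it suffices to exhibit, for every $g\in\Jm\,\widehat{\ }\;$, an integral automorphism of $A_n$ and a convergent lift of it mapping to $g$; crucially, one is free to \emph{reconstruct} such a $\phi$ from $g$ rather than from a $\psi\in\End(\Atn)'$ presenting $g$, which is what avoids having to prove that $\psi_1$ is an automorphism (i.e.\ avoids Dixmier).

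For $n=1$ I would assume Conjecture~\ref{hat is surjective}, so that $\Jm\,\widehat{\ }\;\subseteq\Aut(P_1(\Z))$, and then prove $\Aut(P_1(\Z))\subseteq S$. Fix $g\in\Aut(P_1(\Z))$. Since $\Aut(A_1)\cong\Aut(P_1)$ via the amalgamated-product descriptions of section~\ref{known structure groups} (valid over $\Z$), $g$ is the image of a unique $\phi\in\Aut(A_1(\Z))$; by \ref{phi, psi} write $\phi=\tau_1\circ\cdots\circ\tau_r$ with each $\tau_k$ one of the generators $\phi_{F(x)}$ $(F\in\Z[x])$ or $\psi_{G(\pa)}$ $(G\in\Z[\pa])$, and set $\widetilde{\phi}:=\widetilde{\tau}_1\circ\cdots\circ\widetilde{\tau}_r$ using the explicit integral lifts $\widetilde{\phi}_{F(x)}: x\mapsto x,\ \pa\mapsto\pa+F(x)\cdot\f$ and $\widetilde{\psi}_{G(\pa)}: x\mapsto x+\f\cdot G(\pa),\ \pa\mapsto\pa$. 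Because $\f|_{t=1}=1$, each $\widetilde{\tau}_k$ specializes to $\tau_k$ at $t=1$, so $\widetilde{\phi}$ is a $\Z$-defined lift of $\phi$. By the one-variable computation (Proposition~\ref{evidence at 1}) each $\widetilde{\tau}_k$ converges and $\widehat{\widetilde{\tau}_k}$ is the image of $\tau_k$ under the Belov-Kanel--Kontsevich isomorphism; hence by Proposition~\ref{properties of hat}(c) $\widetilde{\phi}\in\End(A^{(t)}_1(\Z))'$ and $\widehat{\widetilde{\phi}}=\widehat{\widetilde{\tau}_1}\circ\cdots\circ\widehat{\widetilde{\tau}_r}$, which — the Belov-Kanel--Kontsevich map being a homomorphism — is the image of $\phi$, i.e.\ $g$. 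Thus $g\in S$, and in passing $\widehat{\widetilde{\phi}}$ is seen to be independent of the chosen word even though $\widetilde{\phi}$ is not.

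The hard part will be $n\ge 2$, where the argument collapses at each step: the structure of $\Aut(A_n)$ and $\Aut(P_n)$ is unknown, so there is no way to write a given automorphism as a word in manifestly liftable generators; Conjecture~\ref{ConjA'} — that every endomorphism of $A_n$ even admits a convergent lift to $\Atn$ — is itself open; and there is no higher-rank substitute for Proposition~\ref{evidence at 1} identifying $\widehat{\widetilde{\phi}}$ for the lifts one could write down. A proof in general would seem to require, simultaneously, a weak form of Conjecture~\ref{hat is surjective}, the lifting Conjecture~\ref{ConjA'}, and a precise dictionary relating $\;\widehat{\ }\;$ to the Belov-Kanel--Kontsevich correspondence — making Conjecture~\ref{ConjA''} look no easier than $\operatorname{B-KK}_n$ itself. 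The only alternative I can see is to remain on the quantum side and, given $g=\widehat{\psi}$, deform $\psi$ within the fibre of $\;\widehat{\ }\;$ until its specialization at $t=1$ becomes a $\Z$-defined automorphism of $A_n$ with $\widehat{\psi}$ unchanged; here the non-uniqueness of lifts (corollary~\ref{different lifts same result}) would have to be turned from a nuisance into a tool, and that I expect to be the genuine difficulty.
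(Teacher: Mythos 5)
This statement is labeled as a \emph{conjecture} in the paper (Conjecture \ref{ConjA''} in the ``Main conjectures'' section), and the paper offers no proof of it. Your proposal correctly recognizes this: rather than claiming a proof, you give a conditional argument for $n=1$ (assuming Conjecture \ref{hat is surjective}) and an honest analysis of why $n\geq 2$ is genuinely open, concluding the conjecture looks comparable in difficulty to $\operatorname{B-KK}_n$ itself. That is the appropriate assessment, and your $n=1$ sketch is in the same spirit as the evidence the paper actually supplies (Proposition \ref{evidence at 1}, Corollary \ref{image at 1}, and the subgroups $G\subset\Aut(A_1(\Z))$ and $H\subset\Aut(P_1(\Z))$ together with the finiteness of the images \ref{im 1} and \ref{im 2}).

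One technical slip in your $n=1$ argument: you propose to decompose $\phi\in\Aut(A_1(\Z))$ into generators $\phi_{F(x)}$ with $F\in\Z[x]$ and lift each by $\widetilde{\phi}_{F(x)}\colon \pa\mapsto\pa+F(x)\f$. But by Proposition \ref{evidence at 1}, the lift $\widetilde{\phi}_{\lambda x^m}$ converges only when $\lim_{l\to\infty,\,l\text{ prime}}\lambda^l$ exists; for $\lambda=2$ (or any integer with $|\lambda|\geq 2$) this diverges, so $\widetilde{\phi}_{F(x)}$ is not convergent for general $F\in\Z[x]$. You must instead decompose into the \emph{unit-coefficient} monomial generators $\phi_{x^m}$, $\psi_{\pa^m}$ (which is exactly why the paper introduces the groups $G$ and $H$ generated by those), lift each of those, and use Proposition \ref{properties of hat}(c) on the composite. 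With that fix, the resulting $\widehat{\widetilde{\phi}}$ agrees with the image of $\phi$ under the $n=1$ Belov-Kanel--Kontsevich isomorphism on each generator, hence on the product, so word-independence holds; your remark that Corollary \ref{different lifts same result} is thereby sidestepped is also correct, since the problematic second lift there uses the non-integral $\widetilde{\phi}_{1/2}$. Even so, note that this only shows $H\subseteq S$ and one must still invoke the finiteness of \ref{im 2} (or the stronger speculation $\Jm\,\widehat{\ }\,=\Aut(P_n(\Z))$) to close the gap to all of $\Jm\,\widehat{\ }\,$; the paper itself stops at Corollary \ref{image at 1} and leaves that as part of the conjectural package.
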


The moral conclusion we would like to draw from an affirmative
answer to these conjectures is that there should be a homomorphism
$\Aut(A_n) \to \Aut(P_n)$ which is (almost) surjective (and if not
over $\C$-points at least over $\Z$-points).

\section{The case $n=1$}\label{evidence at 1 section}  In this
section we analyze what happens when we apply $\;\widehat{ \ }\;$
to lifts of elements of $\Aut(A_1)$.

\subsection{} It follows from \ref{phi, psi} and the discussion below
it that for any field $\F$ the group  $\Aut(A_1(\F))$ is generated
by elements of the form
\begin{equation}\label{first gen}
\phi_{\lambda x^m}: x \mapsto x, \ \pa \mapsto \pa + \lambda x^m
\hbox{ and }
\end{equation}
\begin{equation}\label{second gen}
\psi_{\lambda \pa^m}: x \mapsto x + \lambda \pa^m, \ \pa \mapsto
\pa,
\end{equation}
for $m \in \Z$ and $\lambda \in \F$ and we have similar generators
for $\Aut(P_1(\F))$. Let $G \subset  \Aut(A_1(\Z))$ be the
subgroup generated by elements $\phi_{x^m}$ and $\psi_{\pa^m}$ for
$m \in \Z$ and let $H \subset  \Aut(P_1(\Z))$ be the subgroup
generated by elements $\phi_{r^m}$ and $\psi_{s^m}$ for $m \in
\Z$. One can show that the images of the compositions
\begin{equation}\label{im 1}
\Aut(A_1(\Z))^{\leq N} \to \Aut(A_1(\Z)) \to \Aut(A_1(\Z))/G
\end{equation}
\begin{equation}\label{im 2}
\Aut(P_1(\Z))^{\leq N} \to \Aut(P_1(\Z)) \to \Aut(P_1(\Z))/H
\end{equation}
are finite sets for any $N \in \N$. This follows by considering
the corresponding sequences over $\F = \Q$ (in which case both
ending terms are $\{e\}$) and clearing denominators.


\begin{proposition} \label{evidence at 1} Let $\phi = \phi_{\lambda x^m} \in \Aut(A_1)$ and let
$\widetilde{\phi} \in \End(\At)$ be the lift defined by $x \mapsto
x, \, \pa \mapsto \pa + \lambda x^m\f$. Then
$\widehat{\widetilde{\phi}}$ converges iff
 $L :=
\lim_{l \to \infty,\, l \; prime} \lambda^l$ converges in $R$ and
in this case $\widehat{\widetilde{\phi}}(r) = r+ L s^m$ and
$\widehat{\widetilde{\phi}}(s) = s$. A similar result holds for
$\psi_{\lambda \pa^m}$.
\end{proposition}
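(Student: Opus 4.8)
The plan is to compute $\widetilde\phi_{q_l}(\pa^l)=(\pa+\lambda x^m\f)^l$ explicitly inside the centre $Z^{q_l}_1=\C[x^l,\pa^l]$ and then let $l\to\infty$ along the primes. The $s$-coordinate is immediate: $\widetilde\phi(x)=x$ gives $\Theta^{-1}_l\widetilde\phi_{q_l}(\Theta_l(s))=\Theta^{-1}_l(x^l)=s$ for every $l$, so $\widehat{\widetilde\phi}(s)=s$. For $r$ I would prove that, for all sufficiently large primes $l$,
$$\widetilde\phi_{q_l}(\pa^l)=\pa^l+\lambda^l x^{ml}-(1-q_l)^l\lambda^l x^{(m+1)l}\pa^l\qquad\text{in } Z^{q_l}_1 .$$
Applying $\Theta^{-1}_l$ this reads $r+\lambda^l s^m-(1-q_l)^l\lambda^l s^{m+1}r$; since $|1-q_l|=2\sin(\pi/l)\to 0$ we have $(1-q_l)^l\to 0$, so this polynomial converges coefficientwise in $\C$ exactly when $\lambda^l$ converges, and then to $r+Ls^m$ (the monomials $r,\ s^m,\ s^{m+1}r$ being pairwise distinct for $m\ge 0$). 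Together with the $s$-statement this is the proposition, apart from the $\psi$-case.

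To prove the displayed identity I would use the irreducible $l$-dimensional representations of Proposition~\ref{AL}. By Theorem~\ref{almost preserve center}, for $l$ outside a finite set the element $(\pa+\lambda x^m\f)^l\in A^{q_l}_1$ is central, hence equals $G_l(x^l,\pa^l)$ for a polynomial $G_l\in\C[u,v]$; it suffices to determine $G_l$ on a Zariski-dense set. Fix a prime $l>m+1$, write $q=q_l$, take $(a,b)\in\C^{\ast}\times\C$ with $ab\neq(1-q)^{-l}$, and let $\pi\colon A^q_1/(\m)\xrightarrow{\ \sim\ }M_l(\C)$ with $\m=(x^l-a,\pa^l-b)$, so that $\pi\big((\pa+\lambda x^m\f)^l\big)=G_l(a,b)I$. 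Exactly as in the proof of Proposition~\ref{AL}, since $a\neq 0$ we may take $X:=\pi(x)=diag(\mu q^i)$ with $\mu^l=a$; then $YX-qXY=I$ forces $Y:=\pi(\pa)=D_Y+N$, where $D_Y=diag\big(\tfrac1{\mu q^i(1-q)}\big)$ and $N=\sum_i Y_{i,i+1}E_{i,i+1}$ is a cyclic weighted shift, and $Y^l=bI$ forces $\prod_i Y_{i,i+1}=b-\tfrac1{a(1-q)^l}$. A short matrix computation then gives $\pi(\f)=I-(1-q)XY=-(1-q)\mu\sum_i q^iY_{i,i+1}E_{i,i+1}$ and, $X^m$ being diagonal, $X^m\pi(\f)=-(1-q)\mu^{m+1}\sum_i q^{(m+1)i}Y_{i,i+1}E_{i,i+1}$, whence
$$\pi(\pa+\lambda x^m\f)=D_Y+\widetilde N,\qquad \widetilde N_{i,i+1}=Y_{i,i+1}\big(1-(1-q)\lambda\mu^{m+1}q^{(m+1)i}\big)$$
is again ``diagonal plus a single cyclic weighted shift''. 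Being the $\pi$-image of a central element, $(D_Y+\widetilde N)^l$ is a scalar matrix, hence equals $\tfrac1l\operatorname{tr}\big((D_Y+\widetilde N)^l\big)\cdot I$; in the expansion of this $l$-th power only the terms with no $\widetilde N$-factor (contributing $D_Y^l=\tfrac1{a(1-q)^l}I$) and with $l$ of them (contributing $\widetilde N^l=\big(\prod_i\widetilde N_{i,i+1}\big)I$) have nonzero trace. Since $l$ is prime and $l>m+1$, $q^{m+1}$ is again a primitive $l$-th root of unity, so $\prod_i\big(1-(1-q)\lambda\mu^{m+1}q^{(m+1)i}\big)=1-\big((1-q)\lambda\mu^{m+1}\big)^l=1-(1-q)^l\lambda^la^{m+1}$; substituting and simplifying (the $1/a$-terms cancel) gives $G_l(a,b)=b+\lambda^la^m-(1-q)^l\lambda^la^{m+1}b$, which is the asserted polynomial identity.

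For the $\psi_{\lambda\pa^m}$ statement: the assignment $x\mapsto\pa$, $\pa\mapsto x$ extends to an involutive anti-automorphism $\tau$ of $\At$ (it carries the defining relation $\pa x-tx\pa-1$ to itself), with $\tau(\f)=\f$. Conjugating $\widetilde\phi$ by $\tau$ yields the lift $\widetilde\psi\colon x\mapsto x+\lambda\f\pa^m,\ \pa\mapsto\pa$ of $\psi_{\lambda\pa^m}$, and on centres $\tau$ induces $x^l\mapsto\pa^l$, $\pa^l\mapsto x^l$, i.e. the transposition $r\leftrightarrow s$ of $P_1$; so by the same formal manipulation as in Proposition~\ref{properties of hat}, $\widehat{\widetilde\psi}$ converges iff $\lambda^l$ does, and then $\widehat{\widetilde\psi}(s)=s+Lr^m$, $\widehat{\widetilde\psi}(r)=r$. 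I expect the only real difficulty to be the middle paragraph — specifically the observation that, once $X=\pi(x)$ is diagonalised in an irreducible representation, $\pi(\pa+\lambda x^m\f)$ keeps the form ``diagonal plus one cyclic weighted shift'', which is exactly what collapses the forbidding expansion of $(\pa+\lambda x^m\f)^l$ into a one-line trace computation; the passage to the limit and the $\psi$-case are then routine.
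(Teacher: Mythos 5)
Your proof is correct, and it takes a genuinely different route from the paper's. The paper expands $(\pa+\lambda x^m\f)^l$ as a sum over words in $\lambda x^m\f$ and $\pa$, and argues via a degree consideration (the quantity $\#x-\#\pa=(m+1)i-l$ is constant on each word with $i$ factors $x^m\f$, and a central element of $Z^{q_l}$ must have $\#x-\#\pa\equiv 0\pmod l$) that the contributions from $0<i<l$ vanish when $l>m+1$ is prime; the surviving terms are $\pa^l$ and $\lambda^l x^{ml}\f^l_{q_l}$, and then Lemma~\ref{fm} together with $\lim_{l\to\infty}\f^l_{q_l}=1$ finishes. (Strictly speaking the paper phrases the vanishing claim \ref{unless} for each \emph{individual} word $w$, which is too strong — a single word is not central; what the argument really shows, via the $\Z$-grading with $\deg x=1=-\deg\pa$ under which $Z^{q_l}$ is graded, is that the sum $S_i$ of all words with $i$ factors $x^m\f$ is the homogeneous component of degree $(m+1)i-l$ and hence central, whence zero.) Your route instead evaluates the central element $(\pa+\lambda x^m\f)^l_{q_l}$ on the Zariski-dense Azumaya locus of Proposition~\ref{AL}, exploits the ``diagonal plus single cyclic weighted shift'' form of $\pi(\pa+\lambda x^m\f)$ to reduce the $l$-th power to a one-line trace computation, and reads off the polynomial $G_l(a,b)=b+\lambda^la^m-(1-q_l)^l\lambda^la^{m+1}b$ directly; this agrees with the paper's $\pa^l+\lambda^l x^{ml}-(1-q_l)^l\lambda^lx^{(m+1)l}\pa^l$. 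Both arguments need the centrality of $(\pa+\lambda x^m\f)^l_{q_l}$; you cite Theorem~\ref{almost preserve center} explicitly for it, which the paper leaves implicit. Your argument is somewhat longer but each step is checkable in isolation, it produces the exact central polynomial in one pass without a case split on $i$ versus $l/(m+1)$, and the $\tau$-conjugation trick for the $\psi_{\lambda\pa^m}$ case is a clean way to get the second family for free. One very small nit: you write that $L$ converges ``in $R$''; the limit $L=\lim\lambda^l$ lives in $\C$ (the paper's statement has the same slip).
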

\begin{proof}
For $a,b$ elements of an associative ring $A$ and $i, j \in \N$
define $W(a,i;b,j)$ to be the set of all words in $a$ and $b$ in
which $a$ has $i$ occurrences  and $b$ has $j$ occurrences. Thus
in $A$ we have
$$
(a+b)^n = \sum_{i+j = n,\, w \in W(a,i;b,j)} w
$$
Consider now the case $A = \At$, assume that $l > m+1$ is prime,
$0 \leq i \leq l$ and $w \in W(\lambda x^m\f,i; \pa,l-i)$. We
shall prove that
\begin{equation}\label{unless}
i \notin \{0,l\} \implies w_{q_l} = 0
\end{equation}
Since $l$ is prime there are two cases: \textbf{1)} $l > (m+1)i$
and \textbf{2)} $l < (m+1)i$.

In case \textbf{1)} we a priori have that  $w$ is a combination of
the elements $$x^{(m+1)i} \pa^l, x^{(m+1)i-1} \pa^{l-1}, \ldots,
\pa^{l-(m+1)i}.$$ Since $w_{q_l} \in Z^{q_l}$ we get $w_{q_l} =
\lambda x^{(m+1)i} \pa^l$, for some $\lambda \in R$, and that $l |
(m+1)$ so that $i = 0$ in this case.

In case \textbf{2)} we a priori have that  $w$ is a combination of
the elements $$x^{(m+1)i} \pa^l, x^{(m+1)i-1} \pa^{l-1}, \ldots,
x^{(m+1)i-l}.$$ Again, since $w_{q_l} \in Z^{q_l}$ we get $w_{q_l}
= \lambda x^{(m+1)i-l}$, for some $\lambda \in R$, and that $l |
(m+1)i-l$ so that $i = l$ in this case. This proves \ref{unless}.

It follows from \ref{unless} that
$$
(\pa + \lambda x^m\f)^l_{q_l} = \pa^l_{q_l} +
\lambda^l(x^m\f)^l_{q_l} = \pa^l_{q_l} +
\lambda^lx^{ml}_{q_l}\f^l_{q_l}
$$
where the last equality follows from $(x^m\f)^l_{q_l} =
{q_l}^{{(l-1)}l/2}x^{ml}\f^l_{q_l}$ and ${q_l}^{{(l-1)}l/2} =1$,
since $l$ is odd and $q_l$ is an $l$'th root of unity.

The proposition now follows from the fact that $\lim_{l \to
\infty} \f^l_{q_l} = 1$.
\end{proof}
This immediately implies
\begin{cor}\label{image at 1} The image of $\;\widehat{\ }$ contains the group $H$.
\end{cor}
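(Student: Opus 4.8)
The plan is to read off Corollary \ref{image at 1} from Proposition \ref{evidence at 1} together with the fact that the map $\;\widehat{\ }\;$ of \ref{ring homomom} is a morphism of monoids. Recall that $H$ is by definition the subgroup of $\Aut(P_1(\Z))$ generated by the symplectomorphisms $\phi_{r^m}$ and $\psi_{s^m}$, $m \in \N$; being a subgroup, $H$ equals the submonoid of $\Aut(P_1(\Z))$ generated by these elements \emph{together with} their inverses.

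The first step is to realize, for each $m$, both generators of $H$ and their inverses as $\widehat{\widetilde\phi}$ for suitable lifts $\widetilde\phi \in \End(\At)'$. I would apply Proposition \ref{evidence at 1} with $\lambda = 1$: since $L = \lim_{l \to \infty,\, l \; prime} 1^l = 1$ exists, the lift $\widetilde{\phi}$ of $\phi_{x^m} \in \Aut(A_1)$ given by $x \mapsto x$, $\pa \mapsto \pa + x^m\f$ lies in $\End(\At)'$, and $\widehat{\widetilde{\phi}}$ is the generator $\phi_{r^m}$ of $H$ (explicitly, $r \mapsto r + s^m$, $s \mapsto s$); the $\psi$-statement of Proposition \ref{evidence at 1} likewise furnishes a lift $\widetilde{\psi} \in \End(\At)'$ of $\psi_{\pa^m}$ with $\widehat{\widetilde{\psi}} = \psi_{s^m}$. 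For the inverses I would use that $\phi_{r^m} \circ \phi_{-r^m} = \Id$, so $\phi_{r^m}^{-1} = \phi_{-r^m}$, and rerun Proposition \ref{evidence at 1} with $\lambda = -1$: all sufficiently large primes $l$ are odd, so $L = \lim_{l \to \infty,\, l \; prime}(-1)^l = -1$ exists and the lift $x \mapsto x$, $\pa \mapsto \pa - x^m\f$ of $\phi_{-x^m}$ lies in $\End(\At)'$ with $\;\widehat{\ }\;$-image $\phi_{-r^m} = \phi_{r^m}^{-1}$; the same applies to $\psi_{s^m}^{-1}$.

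Finally, the map $\;\widehat{\ }\;: \End(\At)' \to \End(P_1)$ is a homomorphism of monoids, by \ref{ring homomom} and Proposition \ref{properties of hat} \textbf{c)}, so its image $\Jm\;\widehat{\ }\;$ is a submonoid of $\End(P_1)$. By the previous step this image contains every generator $\phi_{r^m}$ and $\psi_{s^m}$ of $H$ as well as all their inverses, hence it contains the submonoid they generate, which is exactly $H$. I do not expect a genuine obstacle here, the proof being merely an assembly of facts already in hand; the one point that forces a little care is that the lifts $\widetilde{\phi_{x^m}}$ and $\widetilde{\psi_{\pa^m}}$ are in general non-invertible endomorphisms of $\At$ (cf. Proposition \ref{naively false D}), so the inverses needed to pass from the generating set to the whole group $H$ must be produced by the separately chosen lifts corresponding to $\lambda = -1$, rather than by inverting endomorphisms of $\At$.
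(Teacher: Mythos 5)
Your proposal is correct and is essentially the argument the paper has in mind; the paper offers no explicit proof for this corollary, asserting only that it ``immediately'' follows from Proposition \ref{evidence at 1}. You are right that a little care is needed to obtain inverses of the generators inside $\Jm\;\widehat{\ }\;$, since the lifts $\widetilde{\phi}_{x^m}$, $\widetilde{\psi}_{\pa^m}$ are non-invertible endomorphisms of $\At$ (so Proposition \ref{properties of hat} \textbf{d)} does not apply to them), and your device of rerunning Proposition \ref{evidence at 1} with $\lambda=-1$ (valid because $L=\lim_{l\to\infty,\,l\text{ prime}}(-1)^l=-1$ exists) is exactly the clean way to supply those inverses.
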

The proposition also has the following seemingly unfortunate
consequence
\begin{cor}\label{different lifts same result} There is a $\phi \in \Aut(A_1)$ and two different
convergent lifts $\w \phi$ and $\w \phi'$ such that $\widehat{\w
\phi} \neq \widehat{\w \phi'}$.
\end{cor}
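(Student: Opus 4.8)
The plan is to produce a single $\phi \in \Aut(A_1)$ equipped with two distinct convergent lifts whose images under $\;\widehat{\ }\;$ already disagree on the generator $r$. Take $\phi = \phi_{x}\colon x \mapsto x,\ \pa \mapsto \pa + x$ in $\Aut(A_1)$, and let $\widetilde{\phi} \in \End(\At)$ be the lift of Proposition~\ref{evidence at 1} with $\lambda = 1$ and $m = 1$, namely $x \mapsto x,\ \pa \mapsto \pa + x\f$. By that proposition $\widetilde{\phi}$ is convergent, with $\widehat{\widetilde{\phi}}(r) = r + s$ and $\widehat{\widetilde{\phi}}(s) = s$.

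For the second lift I would take $\widetilde{\phi}'\colon x \mapsto x,\ \pa \mapsto \pa + \mu\,x\f$ with $\mu = \tfrac{t+1}{2} \in R$. One first checks this is a well-defined $R$-algebra endomorphism of $\At$: since $\mu$ is central and $\f x = t\,x\f$ by~\ref{frel}, the element $g = \mu\,x\f$ satisfies $gx = t\,xg$, hence $[\pa + g, x]_t = [\pa,x]_t = 1$. Because $\f$ and $\mu$ both specialize to $1$ at $t = 1$, we have $(\widetilde{\phi}')_1 = \phi$, so $\widetilde{\phi}'$ is again a lift of $\phi$; it is distinct from $\widetilde{\phi}$ since $(\mu - 1)x\f = \tfrac{t-1}{2}x\f \neq 0$ in $\At$.

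To compute $\widehat{\widetilde{\phi}'}$ I would rerun the proof of Proposition~\ref{evidence at 1} with the scalar $\lambda$ replaced by the central element $\mu \in R$: any word $w \in W(\mu\,x\f,\,i;\,\pa,\,l-i)$ equals $\mu^{i}$ times a word in $x\f$ and $\pa$, so the vanishing statement~\ref{unless} goes through unchanged, and for every odd prime $l > 2$ one obtains
\[
\bigl(\pa + \mu\,x\f\bigr)^{l}_{q_l} = \pa^{l}_{q_l} + \mu(q_l)^{l}\,x^{l}_{q_l}\,\f^{l}_{q_l}
\]
(using $(x\f)^{l} = t^{l(l-1)/2}x^{l}\f^{l}$ and $q_l^{l(l-1)/2} = 1$, exactly as in Proposition~\ref{evidence at 1}). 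Applying $\Theta_l^{-1}$ and Lemma~\ref{fm} yields
\[
\Theta_l^{-1}\bigl((\widetilde{\phi}')_{q_l}(\pa^{l})\bigr) = r + \bigl(\tfrac{q_l+1}{2}\bigr)^{l} s\,\bigl(1 - (1-q_l)^{l} s r\bigr).
\]
Since $\tfrac{q_l+1}{2} = e^{\pi\ii/l}\cos(\pi/l)$, we have $\bigl(\tfrac{q_l+1}{2}\bigr)^{l} = -\cos(\pi/l)^{l} \to -1$, while $(1-q_l)^{l} \to 0$ as in the last line of the proof of Proposition~\ref{evidence at 1}. Hence $\widehat{\widetilde{\phi}'}(r) = r - s$ converges, and $\widehat{\widetilde{\phi}'}(s) = s$ since $\widetilde{\phi}'(x) = x$, so $\widetilde{\phi}'$ is convergent by Proposition~\ref{properties of hat}\,b). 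Thus $\widetilde{\phi}$ and $\widetilde{\phi}'$ are two convergent lifts of the same $\phi$ with $\widehat{\widetilde{\phi}}(r) = r + s \neq r - s = \widehat{\widetilde{\phi}'}(r)$, whence $\widehat{\widetilde{\phi}} \neq \widehat{\widetilde{\phi}'}$.

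The one place needing genuine care — and essentially the only obstacle — is the transfer of the combinatorics of Proposition~\ref{evidence at 1} from a constant coefficient $\lambda$ to the $t$-dependent coefficient $\mu(t)$. This is harmless precisely because $R$ lies in the center of $\At$: the factor $\mu$ pulls out of each word, leaving the degree/weight argument behind~\ref{unless} untouched and merely changing the surviving scalar from $\lambda^{l}$ to $\mu(q_l)^{l}$. Everything else is a routine repetition of the arguments already given for Proposition~\ref{evidence at 1}.
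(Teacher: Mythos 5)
Your proof is correct, but it takes a genuinely different route from the paper's. The paper takes $\phi=\phi_1\colon x\mapsto x,\ \pa\mapsto\pa+1$ (the case $m=0$) and manufactures the second lift as a composition, $\w\phi'=\w\phi_{1/2}\circ\w\phi_{1/2}$ with $\w\phi_{1/2}\colon\pa\mapsto\pa+\tfrac12\f$; since $\lim(1/2)^l=0$, Proposition~\ref{evidence at 1} gives $\widehat{\w\phi_{1/2}}=\Id$, and Proposition~\ref{properties of hat}~d) then forces $\widehat{\w\phi'}=\Id\neq\widehat{\w\phi}$. That argument invokes only results already stated. You instead keep a single generator but replace the constant coefficient $1$ by the $t$-dependent central element $\mu=\tfrac{t+1}{2}\in R$, and then \emph{re-prove} a variant of Proposition~\ref{evidence at 1}: the key extra observation --- that $\mu$, being in the central ring $R$, pulls out of each word so that \ref{unless} is unaffected, and that $\lim_l\mu(q_l)^l=\lim_l\bigl(-\cos(\pi/l)^l\bigr)=-1$ --- is correct, but it is an extension of the proposition rather than a citation of it (the proposition as stated only covers $\lambda\in\C$). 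Both proofs are valid; the paper's is slightly more economical since it stays strictly within the stated toolkit, while yours illustrates more directly how altering the lift's coefficient inside $R$ can shift the limiting symplectomorphism, and gives an example where the two limits disagree on a nonconstant term ($r+s$ versus $r-s$) rather than by a translation.
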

\begin{proof} For $\lambda \in \C$ we have $\phi_{\lambda}: x \mapsto x, \pa \mapsto \pa +
\lambda$. Let $\phi = \phi_1$. Let $\w \phi: x \mapsto x, \pa
\mapsto \pa + \f$, let $\w \phi_{1 \over 2}: x \mapsto x, \pa
\mapsto \pa + {1 \over 2}\cdot \f$  and let $\w \phi' = \w \phi_{1
\over 2} \circ \w \phi_{1 \over 2}$. Thus $\w \phi$ and $\w \phi'$
are lifts of $\phi$.

By proposition \ref{properties of hat} \textbf{d)} and proposition
\ref{evidence at 1}  we have that $\widehat{\widetilde{\phi}_{1
\over 2} \circ \widetilde{\phi}_{1 \over 2}} =
\widehat{\widetilde{\phi}}_{1 \over 2} \circ
\widehat{\widetilde{\phi}}_{1 \over 2} = \Id_{\BGG_2}$ while
$\widehat{\widetilde{\phi}}_{1}$ is given by $r \mapsto r+1$ and
$s \mapsto s$.
\end{proof}

If we consider the subgroup $B$ of $\End(\At)$ generated by
elements of the form
$$
x \mapsto x, \pa \mapsto \pa + P(x)\f \hbox{ and } x \mapsto x +
Q(\pa)\f, \pa \mapsto \pa
$$
for $P(x) \in \C[x]$ and $Q(\pa) \in \C[\pa]$, then an extension
of the computation of proposition \ref{evidence at 1} can be used
to show that $\,\widehat{ \ } (B \cap \End(\At)') \subseteq
\Aut(P_n(\Z))$. This is an indication (though not a proof) that
$\,\widehat{ \ } (\End(\At)') \subseteq \Aut(P_n(\Z))$. This and
corollary \ref{image at 1} are the evidence we have for conjecture
\ref{hat is surjective}.

\begin{remark} Note that the assumption that the limit in the proof of the proposition was taken over primes
$l$ was essential; e.g., one can show that the constant term of
$\lim_{l \to \infty, l \in 2\N} \Theta^{-1}_l((\pa +x\f)^l_{q_l})$
diverges (while this constant term is zero for all odd values of
$l$).
\end{remark}

\begin{remark}\label{Frobenius on C} If the map $a \mapsto a^l$ on $\C$ would have been a field automorphism with inverse
$\operatorname{Fr}^{-1}_{l}$ we could instead have defined
$\widehat{\phi}$ as follows: First, $\operatorname{Fr}^{-1}_{l}$
induces a ring automorphism of $\BGG_{2n}$ by acting on the
coefficients of a polynomial. Then we could have put
$$\widehat{\phi}(P)= \lim_{l \to \infty} \Theta^{-1}_l \circ \phi
\circ \Theta_l \circ \operatorname{Fr}^{-1}_{l}(P), \; P \in
\BGG.$$ Then the computation in proposition \ref{evidence at 1}
would have given
$$\widehat{\widetilde{\phi}}_{\lambda x^m}: s \mapsto s, r \mapsto r + \lambda s^m
$$
for \emph{any} $\lambda \in \C$ (with respect to the lift
$\widetilde{\phi}_{\lambda x^m}$ defined in the proof of the
proposition) which seems to be precisely what we would have
wanted. Also the composition $\;\widehat{ \ } \circ \w { \ }\;$
would probably have been independent of the choice of $\;\w { \
}\;$ in this case and one could expect that all elements of
$\End(\Atn)$ converge and that $\End(\Atn) \to \Aut(P_n)$ is
surjective, (in particular we wouldn't need to discuss
$\Z$-points).

The reduction modulo a prime method has an advantage here due to
the existence of a Frobenius map in each prime characteristic; for
the record, we have not been able to see how a non-standard
Frobenius on $\C$ could help to give a better definition of
$\;\widehat{ \ }\;$.
\end{remark}

\end{document}